\newtheorem{theorem}{\bf Theorem}
\newtheorem{proposition}[theorem]{\bf Proposition}
\newtheorem{corollary}[theorem] {\bf Corollary}
\newtheorem{lemma}[theorem] {\bf Lemma}
\newtheorem{example}{\bf Example}
\newtheorem{definition}{\bf Definition}
\newtheorem{remark}{\bf Remark}
\DeclareMathOperator{\interior}{Int\,\!}
\newcommand{\Dh}{\widehat{D}}
\newcommand{\Fh}{\widehat{F}}
\newcommand{\Lh}{\widehat{L}}
\newcommand{\la}{\lambda}
\begin{document}
\title[Estimates for polynomial norms on Banach spaces.]
{Estimates for polynomial norms on Banach spaces.}

\author[M. Chatzakou \and Y. Sarantopoulos]
{Marianna Chatzakou and Yannis Sarantopoulos}

\thanks{2010 Mathematics Subject Classification. Primary 41A17; Secondary 46G25, 47H60 \\
Marianna Chatzakou was supported by the FWO Odysseus 1 grant G.0H94.18N: Analysis and Partial Diﬀerential Equations.}


\address[Marianna Chatzakou]
{Mathematics Department: Analysis, Logic and Discrete Mathematics\\
Ghent University, Belgium}
\email{marianna.chatzakou@ugent.be}

\address[Yannis Sarantopoulos]
{Department of Mathematics, National Technical University\\
Zografou Campus 157 80, Athens, Greece}
\email{ysarant@central.ntua.gr }

\maketitle


\begin{abstract}
Our work is related to problems $73$ and $74$ of Mazur and Orlicz in ``The Scottish Book" (ed. R. D. Mauldin). Let $k_1, \ldots, k_n$ be nonnegative integers such that $\sum_{i=1}^{n} k_{i}=m$, and let $\mathbb{K}(k_1, \ldots, k_n; X)$, where $\mathbb{K}=\mathbb{R}$ or $\mathbb{C}$, be the smallest number satisfying the property: if $L$ is any symmetric $m$-linear form on a Banach space $X$, then
\[
\sup_{\|x_{i}\|\leq 1 ,\atop i=1,2,\ldots ,n} |L(x_{1}^{k_1},\ldots ,x_{n}^{k_n})|\leq \mathbb{K}(k_1, \ldots, k_n; X)\sup_{\|x\|\leq 1} |L(x, \ldots ,x)|\,,
\]
where the exponents $k_{1}, \ldots, k_{n}$, are as described above, and each $k_{i}$ denotes the number of coordinates in which the corresponding base variable appears. In the case of \textit{complex} Banach spaces, the problem of optimising the constant $\mathbb{C}(k_1, \ldots, k_n; X)$ is well-studied. In the more challenging case of \textit{real} Banach spaces much less is known about the estimates for $\mathbb{R}(k_1, \ldots, k_n; X)$. In this work, both real and complex settings are examined using results from the local theory of Banach spaces, as well as from interpolation theory of linear operators. In the particular case of \textit{complex} $L^{p}(\mu)$ spaces, and for certain values of $p$, our results are optimal. As an application, we prove Markov-type inequalities for homogeneous polynomials on Banach spaces.
\end{abstract}



\section{\bf Introduction and notation.}

As in \cite{ChSar} we recall the basic definitions needed to discuss polynomials from $X$ into $Y$, where $X$ and $Y$ are real or complex Banach spaces. We denote by $B_X$ and $S_X$ the closed unit ball and the unit sphere of $X$, respectively. A map $P: X\rightarrow Y$ is a (continuous) {\em $m$-homogeneous polynomial} if there is a (continuous) symmetric $m$-linear mapping $L: X^m \rightarrow Y$ for which $P(x)=L(x, \ldots, x)$ for all $x\in X$. In this case it is convenient to write $P=\widehat{L}$. We let $\mathcal{P}(^{m}X; Y)$, $\mathcal{L}({}^{m}X; Y)$ and $\mathcal{L}^{s}({}^{m}X; Y)$ denote, respectively, the spaces of continuous $m$-homogeneous polynomials from $X$ into $Y$, the continuous $m$-linear mappings from $X$ into $Y$ and the continuous symmetric $m$-linear mappings from $X$ into $Y$. If $\mathbb{K}$ is the real or complex field we use the notations $\mathcal{P}({}^{m}X)$, $\mathcal{L}({}^{m}X)$ and $\mathcal{L}^{s}({}^{m}X)$ in place of $\mathcal{P}({}^{m}X; \mathbb{K})$, $\mathcal{L}({}^{m}X; \mathbb{K})$ and $\mathcal{L}^{s}({}^{m}X; \mathbb{K})$, respectively. More generally, a map
$P: X\rightarrow Y$ is a {\em continuous polynomial of degree $\leq m$} if
\[
P=P_0 +P_1 +\cdots +P_m\,,
\]
where $P_k \in\mathcal{P}({}^{k}X; Y)$, $1\leq k\leq m$, and $P_0: X\rightarrow Y$ is a constant function. The space of continuous polynomials from $X$ to $Y$ of degree at most $m$ is denoted by $\mathcal{P}_m(X; Y)$. If $Y=\mathbb{K}$, then we use the notation $\mathcal{P}_{m}(X)$ instead of $\mathcal{P}_{m}(X; \mathbb{K})$. We define the norm of a continuous (homogeneous) polynomial $P: X\rightarrow Y$ by
\[
\|P\|_{B_X}=\sup\{\|P(x)\|_Y:\ x\in B_X\}\,.
\]
Similarly, if $L: X^{m} \rightarrow Y$ is a continuous $m$-linear mapping we define its norm by
\[
\|L\|_{B_{X}^{m}}=\sup\{\|L(x_1, \ldots, x_m)\|_Y: \ x_1, \ldots, x_m\in B_X\}\,.
\]
When convenient we shall denote $\|L\|_{B_{X}^{m}}$ by $\|L\|$ and $\|P\|_{B_{X}}$ by $\|P\|$. Note that $\mathcal{P}({}^m X; Y)$ and $\mathcal{L}({}^m X; Y)$ are Banach spaces. Finally, observe that by the Hahn-Banach theorem we may restrict attention to the case where $Y=\mathbb{R}$ or $\mathbb{C}$, since estimates can then be tranferred to arbitrary Banach spaces as required.

If $P\in\mathcal{P}_m(X; Y)$ and $x\in X$, then $D^{k}P(x)$, $2\leq k\leq m$, denotes the $k$th Fr\'echet derivative of $P$ at $x$. Recall that $D^{k}P(x)$ would be, in fact, a symmetric $k$-linear mapping on $X^{k}$, whose associated $k$-homogeneous polynomial will be represented by $\widehat{D}^{k}P(x)$. So, $\widehat{D}^{k}P(x):= \widehat{D^{k}P(x)}$. We just write $DP(x)$ for the first Fr\'echet derivative of $P$ at $x$. If $\widehat{L}\in\mathcal{P}(^{m}X; Y)$, for any vectors $x, y_{1}, \ldots ,y_{k}$ in $X$ and any $k\leq m$ the following identity (see for instance \cite[7.7 theorem]{Chae}) holds
\begin{equation}\label{k-differential}
\frac{1}{k!}D^{k}\Lh(x)(y_{1}, \ldots, y_{k})={m\choose k}L(x^{m-k}, y_{1}, \ldots, y_{k})\,.
\end{equation}
In particular, for $x, y\in X$
\begin{equation}\label{k-homog-differential}
\frac{1}{k!}\Dh^{k}\Lh(x)y={m\choose k}L(x^{m-k} y^{k})
\end{equation}
and for $k=1$
\begin{equation}\label{1-differential}
D\Lh(x)y=\Dh\Lh(x)y=mL(x^{m-1} y)\,.
\end{equation}
Here, $L(x^{m-k} y^{k})$ denotes $L(\underbrace{x, \ldots, x}_{(m-k)}, \underbrace{y, \ldots, y}_{k})$. For general background on polynomials, we refer to \cite{Chae} and \cite{D}.

Let $r_{n}(t):=\text{sign}(\sin2^n\pi t)$ be the \textit{$n$th Rademacher function} on [0,1]. The Rademacher functions $(r_{n})$ form an orthonormal set in $L^{2}[0, 1], dt)$ where $dt$ denotes Lebesgue measure on [0,1]. If $L\in{\mathcal L}^{s}({}^{m} X)$, the next formula expresses a well known {\it polarization formula} in a very convenient form (see \cite[Lemma 2]{Sar2}):
\begin{equation}\label{polarformula}
L(x_1, \ldots, x_m) = \frac{1}{m!}\int_{0}^{1} r_{1}(t)\cdots r_{m}(t) \Lh\bigg(\sum_{n=1}^{m}r_{n}(t)x_{n}\bigg)\, dt\,.
\end{equation}
Therefore, each $\Lh\in\mathcal{P}({}^{m}X)$ is associated with a unique $L\in{\mathcal L}^{s}({}^{m} X)$ with the property that $\Lh(x)=L(x, \ldots, x)$. In many circumstances \cite{Davie-1,Davie-2,Peller,Var} it is of interest to compare the norm of $L\in\mathcal{L}^{s}(^{m}X)$ with the norm of  $\Lh\in\mathcal{P}({}^{m}X)$. For every $L\in{\mathcal L}^{s}({}^{m} X)$ it follows from \eqref{polarformula}(see \cite{D}) that
\begin{equation}\label{polarrange}
\|\Lh\|\leq \|L\|\leq \frac{m^{m}}{m!}\|\Lh\|\,.
\end{equation}
However, the right hand inequality can be tightened for many Banach spaces,  see for instance \cite{D,H1,Sar2}. For instance, the space of $1$-summable sequences $\ell^1$ and its finite-dimensional versions are particularly simple, yet fundamental, examples of Banach spaces where the constant $\frac{m^{m}}{m!}$ is sharp. We refer to \cite[corollary 1] {H1}, \cite[example 1]{Sar2}, \cite{Sar3}, \cite[example $1.39$]{D} and \cite{KST}.
We introduce the following definition.

\begin{definition}
Let $X$ be a Banach space over $\mathbb{K}$, where $\mathbb{K}=\mathbb{R}$, or $\mathbb{C}$.
If $k_1, \ldots, k_n$ are nonnegative integers whose sum is $m$, let
\begin{eqnarray*}
\lefteqn{\mathbb{K}(k_1, \ldots, k_n; X)}\\
& & =\inf \left\{M>0: \sup_{x_1, \ldots, x_n\in B_X}|L(x_{1}^{k_1}\ldots x_{n}^{k_n})|\leq M\|\Lh\|,\,\,\forall L\in\mathcal{L}^{s}({}^{m}X; \mathbb{K})\right\}\,.
\end{eqnarray*}
In the special case $k_1= \cdots =k_n=1$, we have $n=m$ and we let
\[
\mathbb{K}(m, X)=\inf \left\{M>0: \|L\|\leq M \|\Lh\|,\,\,\forall L\in\mathcal{L}^{s}({}^{m}X; \mathbb{K})\right\}\,.
\]
We call $\mathbb{K}(k_1, \ldots, k_n; X)$ and $\mathbb{K}(m, X)$ the \textit{$m$-th polarization constant} of the space $X$.
\end{definition}

Notice that $L(x_{1}^{k_1}\ldots x_{n}^{k_n})$ denotes $L(\underbrace{x_1, \ldots, x}_{k_1}, \ldots, \underbrace{x_{n}, \ldots, x_{n}}_{k_n})$. We shall write $\mathbb{R}(k_1, \ldots, k_n; X)$, $\mathbb{C}(k_1, \ldots, k_n; X)$ instead of $\mathbb{K}(k_1, \ldots, k_n; X)$, if the space $X$ is real, complex, respectively.

The \textit{polarization constant} of the space $X$, see \cite[definition 1.40]{D}, is defined by
\[
\mathbb{K}(X):=\limsup_{m\to \infty} \mathbb{K}(m, X)^{1/m}
\]
and describes how the the $m$-th polarization constant of $X$ behaves asymptotically. From \eqref{polarrange} and Stirling's formula, $1\leq\mathbb{K}(X)\leq e$ for any Banach space $X$.

In the sequel, $H$ will denote a Hilbert space. A famous result, investigated by Banach \cite{Banach} and many other authors, for example \cite{BS,CS,H1,Ho,Ke,PST}, asserts that $\mathbb{K}(m, H)=1$. In other words, $\|L\|=\|\Lh\|$ for every $L\in\mathcal{L}^{s}(^{m} H)$. In fact it was shown in \cite{BSar} that this is a characteristic property of \textit{real} Hilbert spaces. We also have $\mathbb{K}(k_1, \ldots, k_n; H)=1$, where $k_1, \ldots, k_n$ are nonnegative integers whose sum is $m$. Obviously $|L(x_{1}^{k_1}\ldots x_{n}^{k_n})|\leq \|\Lh\|$ for every $L\in\mathcal{L}^{s}(^{m} H)$, where $x_1, \ldots, x_n\in B_H$.

Since from \eqref{1-differential} $D\Lh(x)(y)=m L(x^{m-1} y)$, $y\in H$, to prove $\|L\|=\|\Lh\|$ by an inductive argument, it suffices to show that $|L(x^{m- 1} y)|\leq\|\Lh\|$ for any unit vectors $x$ and $y$ in $H$. In other words, $\|L\|=\|\Lh\|$ for any $\Lh\in\mathcal{P}\left({}^{m} H\right)$ if and only if
\begin{equation}\label{Bernstein-1}
\|D\Lh\|\leq m\|\Lh\|\,,\qquad \forall\, \Lh\in\mathcal{P}\left({}^{m} H\right)\,.
\end{equation}

Banach proved this result for continuous symmetric $m$-linear forms and continuous $m$-homogeneous polynomials on {\em finite dimensional real} Hilbert spaces. The proof works equally well for real and complex Hilbert spaces, and the condition of finite dimensionality is only needed to ensure that the $m$-linear form attains its norm.  The result that $\|L\|=\|\Lh\|$ is true for all Hilbert spaces, and, as pointed out by Banach, can be obtained through a simple limit argument based on the finite dimensional case.

Clearly, if $\Lh$ attains its norm at $x_{0}\in B_{H}$ then $L$ also attains its norm at $\left(x_{0}, \ldots, x_{0}\right)\in B_{H}^{m}$. When $H$ is finite dimensional, $L$ will always attain its norm, since the closed unit ball of $H$ is compact. However, when $H$ is infinite dimensional, $L$ need not attain its norm: if $H=\ell^{2}$, the space of square summable sequences, and $L(x, y) =\sum_{n=1}^{\infty} \frac{n}{n+1}x_{n}y_{n}$, it is easy to see that $\|L\|=1$, but that $|L(x, y)|< 1$ for all unit vectors $x=(x_{n})$ and $y=(y_{n})$ in $H$.

It is true, but not obvious, that if $L$ attains its norm at $\left(x_{1}, \ldots, x_{m}\right)\in B_{H}^{m}$, then $\Lh$ also attains its norm at some $x_{0}\in B_{H}$. When $L$ does attain its norm, an explicit construction has been given in \cite[section $2$]{PST} to provide a unit vector $x_{0}$ with $\|\Lh\|=|\Lh(x_{0})|$.


\section{\bf Preliminary results.}

It is simple to verify that for isometric Banach spaces $X, Y$ we have $\mathbb{K}(k_1, \ldots, k_n; X)
=\mathbb{K}(k_1, \ldots, k_n; Y)$. If $M$ is a closed subspace of the Banach space $X$, then it is routine to prove that $\mathbb{K}(k_1, \ldots, k_n; X/M)\leq\mathbb{K}(k_1, \ldots, k_n; X)$ and in particular $\mathbb{K}(m; X/M)\leq\mathbb{K}(m; X)$. However, the relationship between $\mathbb{K}(k_1, \ldots, k_n; X)$ and $\mathbb{K}(k_1, \ldots, k_n; Y)$ is less satisfactory. We state one result, but omit the proof which is quite similar to an analogous argument used in the proof of lemma $1.46$ in \cite{D}.

\begin{lemma}\label{projection}
If $M$ is a closed subspace of the Banach space $X$ and there exists a continuous projection $\pi$ of $X$ onto $M$, then
\[
\mathbb{K}(k_1, \ldots, k_n; M)\leq \|\pi\|^{m}\mathbb{K}(k_1, \ldots, k_n; X)\,.
\]

In particular if $M$ is a $1$-complemented subspace of $X$, then $\mathbb{K}(k_1, \ldots, k_n; M) \leq\mathbb{K}(k_1, \ldots, k_n; X)$.
\end{lemma}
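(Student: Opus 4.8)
The plan is to reduce the inequality on $M$ to the defining property of $\mathbb{K}(k_1, \ldots, k_n; X)$ by lifting each symmetric $m$-linear form on $M$ to one on $X$ via composition with the projection $\pi$. Given an arbitrary $L \in \mathcal{L}^{s}({}^{m}M)$, I would set
\[
\widetilde{L}(x_1, \ldots, x_m) := L(\pi x_1, \ldots, \pi x_m)\,, \qquad x_1, \ldots, x_m \in X\,,
\]
which is a continuous symmetric $m$-linear form on $X$ (since $\pi$ is linear and continuous and $L$ is symmetric $m$-linear), whose associated polynomial is $x \mapsto \Lh(\pi x)$. Applying the definition of $\mathbb{K}(k_1, \ldots, k_n; X)$ to $\widetilde{L}$ gives
\[
\sup_{y_1, \ldots, y_n \in B_X} |\widetilde{L}(y_1^{k_1} \ldots y_n^{k_n})| \leq \mathbb{K}(k_1, \ldots, k_n; X)\, \|\widehat{\widetilde{L}}\|_{B_X}\,,
\]
and it remains to compare both sides with the corresponding quantities for $L$ on $M$.

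For the left-hand side I would use that $B_M = B_X \cap M \subseteq B_X$ and that $\pi$ restricts to the identity on $M$: for $x_1, \ldots, x_n \in B_M$ one has $\widetilde{L}(x_1^{k_1} \ldots x_n^{k_n}) = L(x_1^{k_1} \ldots x_n^{k_n})$, so that
\[
\sup_{x_1, \ldots, x_n \in B_M} |L(x_1^{k_1} \ldots x_n^{k_n})| = \sup_{x_1, \ldots, x_n \in B_M} |\widetilde{L}(x_1^{k_1} \ldots x_n^{k_n})| \leq \sup_{y_1, \ldots, y_n \in B_X} |\widetilde{L}(y_1^{k_1} \ldots y_n^{k_n})|\,.
\]

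The main — though still elementary — point is the bound on the polynomial norm, where the factor $\|\pi\|^{m}$ arises, and here I would invoke $m$-homogeneity. For $x \in B_X$ we have $\pi x \in M$ with $\|\pi x\| \leq \|\pi\|$, so writing $u = \pi x / \|\pi x\| \in S_M$ (the case $\pi x = 0$ being trivial),
\[
|\widehat{\widetilde{L}}(x)| = |\Lh(\pi x)| = \|\pi x\|^{m}\, |\Lh(u)| \leq \|\pi\|^{m} \|\Lh\|_{B_M}\,,
\]
whence $\|\widehat{\widetilde{L}}\|_{B_X} \leq \|\pi\|^{m} \|\Lh\|_{B_M}$. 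Combining the three displays yields
\[
\sup_{x_1, \ldots, x_n \in B_M} |L(x_1^{k_1} \ldots x_n^{k_n})| \leq \|\pi\|^{m}\, \mathbb{K}(k_1, \ldots, k_n; X)\, \|\Lh\|_{B_M}\,,
\]
and since $L \in \mathcal{L}^{s}({}^{m}M)$ was arbitrary, the characterization of $\mathbb{K}(k_1, \ldots, k_n; M)$ as an infimum gives the claimed inequality. The final assertion is immediate: if $M$ is $1$-complemented one may take a projection with $\|\pi\| = 1$, so that $\|\pi\|^{m} = 1$.

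I expect no serious obstacle. The only points to watch are the homogeneity scaling in the polynomial-norm bound (which produces the factor $\|\pi\|^{m}$) and the fact that $\pi$ acts as the identity on $M$, so that passing to $B_M$ loses no information on the multilinear side.
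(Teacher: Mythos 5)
Your proof is correct and is exactly the standard lifting argument the paper has in mind when it omits the proof and points to the analogous Lemma 1.46 in Dineen's book: compose $L$ with $\pi$ in each variable, use that $\pi$ is the identity on $M$ to control the multilinear side, and use $m$-homogeneity together with $\|\pi x\|\leq\|\pi\|$ to get the factor $\|\pi\|^{m}$ on the polynomial norm. No gaps; the degenerate case $\pi x=0$ and the passage from the infimum in the definition of $\mathbb{K}(k_1,\ldots,k_n;X)$ to a valid constant are both handled or trivially handled.
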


If $k_1, \ldots, k_n$ are nonnegative integers whose sum is $m$, for $L^{p}(\mu)$ spaces we also set
\[
\mathbb{K}(k_{1}, \ldots, k_{n}; p)=\sup\{\mathbb{K}(k_{1}, \ldots, k_{n}; L^{p}(\mu)): \text{$\mu$ is a measure}\}\,.
\]
Using Lemma \ref{projection} we can easily see that if $1\leq p<\infty$
\begin{equation}\label{projappl-1}
\mathbb{K}(k_{1}, \ldots, k_{n}; p)=\mathbb{K}(k_{1}, \ldots, k_{n}; L^{p}(\mu))
\end{equation}
for any $\mu$ with $L^{p}(\mu)$ \textit{infinite-dimensional} (we refer to \cite{Sar2}).

It is known, see \cite[theorem $II.3.14$]{LT}) or \cite[proposition $11.1.9$]{AK}), that we can embed $L^{p}[0, 1]$ isometrically into $L^{r}[0, 1]$ for $1\leq r\leq p\leq 2$. If $r'$ and $p'$ are the conjugate exponents of $r$ and $p$, respectively, it follows from \eqref{projappl-1} and Lemma \ref{projection} that
\[
\mathbb{K}(k_1, \ldots, k_n; p')\leq\mathbb{K}(k_1, \ldots, k_n; r')\,,\qquad 2\leq p'\leq r'\leq \infty\,.
\]
Thus $\mathbb{K}(k_1, \ldots, k_n; p)$ is an increasing function of $p$ over the range $2\leq p\leq \infty$.

Let $X$ and $Y$ be two isomorphic Banach spaces. The Banach-Mazur distance between $X$ and $Y$ denoted as $d(X, Y)$ is defined as
\[
d(X, Y)=\inf\{\|T\|\|T^{-1}\|:\,T: X\xrightarrow{\text{onto}} Y\}\,.
\]
If $X$ is finite dimensional and $d(X, Y)=1$, then $X$ is isometric to $Y$.

Using the Banach-Mazur distance between $X$ and $Y$, we relate $\mathbb{K}(k_1, \ldots, k_n; X)$ and $\mathbb{K}(k_1, \ldots, k_n; Y)$.

\begin{lemma}\label{BM-distance}
If $X$, $Y$ are isomorphic Banach spaces, then
\[
\mathbb{K}(k_1, \ldots, k_n; Y)\leq (d(X, Y))^{m}\,\mathbb{K}(k_1, \ldots, k_n; X)\,.
\]
In particular,
\[
\mathbb{K}(m, Y)\leq (d(X, Y))^{m}\,\mathbb{K}(m, X)\,.
\]
\end{lemma}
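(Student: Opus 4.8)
The plan is to transfer the problem from $Y$ to $X$ through a fixed isomorphism and then optimise over all such isomorphisms. Let $T\colon X\to Y$ be any onto isomorphism and let $L\in\mathcal{L}^{s}({}^{m}Y;\mathbb{K})$. The natural object to introduce is the pullback $L_{T}(x_{1},\ldots,x_{m}):=L(Tx_{1},\ldots,Tx_{m})$, which is again a continuous symmetric $m$-linear form, now on $X$, with associated polynomial $\widehat{L_{T}}(x)=\Lh(Tx)$. I would then apply the defining inequality of $\mathbb{K}(k_{1},\ldots,k_{n};X)$ to $L_{T}$ and translate each side back into data about $L$ on $Y$.

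For the right-hand side, if $\|x\|\leq 1$ then $\|Tx\|\leq\|T\|$, so by $m$-homogeneity $|\Lh(Tx)|\leq\|T\|^{m}\|\Lh\|_{B_{Y}}$; taking the supremum over $x\in B_{X}$ gives $\|\widehat{L_{T}}\|_{B_{X}}\leq\|T\|^{m}\|\Lh\|_{B_{Y}}$. For the left-hand side I would run the substitution in the opposite direction: given $y_{1},\ldots,y_{n}\in B_{Y}$, set $x_{i}=T^{-1}y_{i}$, so that $\|x_{i}\|\leq\|T^{-1}\|$ and $L(y_{1}^{k_{1}}\cdots y_{n}^{k_{n}})=L_{T}(x_{1}^{k_{1}}\cdots x_{n}^{k_{n}})$. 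Normalising each argument to the unit ball of $X$ contributes a factor $\|T^{-1}\|$, and since the total number of slots is $\sum_{i}k_{i}=m$ the combined factor is $\|T^{-1}\|^{m}$, whence
\[
\sup_{\|y_{i}\|\leq 1}|L(y_{1}^{k_{1}}\cdots y_{n}^{k_{n}})|\leq\|T^{-1}\|^{m}\sup_{\|x_{i}\|\leq 1}|L_{T}(x_{1}^{k_{1}}\cdots x_{n}^{k_{n}})|\,.
\]

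Combining the two estimates with the definition of $\mathbb{K}(k_{1},\ldots,k_{n};X)$ applied to $L_{T}$, one obtains
\[
\sup_{\|y_{i}\|\leq 1}|L(y_{1}^{k_{1}}\cdots y_{n}^{k_{n}})|\leq\bigl(\|T\|\,\|T^{-1}\|\bigr)^{m}\mathbb{K}(k_{1},\ldots,k_{n};X)\,\|\Lh\|_{B_{Y}}\,.
\]
As this holds for every $L\in\mathcal{L}^{s}({}^{m}Y;\mathbb{K})$, the infimum defining $\mathbb{K}(k_{1},\ldots,k_{n};Y)$ yields $\mathbb{K}(k_{1},\ldots,k_{n};Y)\leq(\|T\|\,\|T^{-1}\|)^{m}\mathbb{K}(k_{1},\ldots,k_{n};X)$ for this single $T$. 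Taking the infimum over all onto isomorphisms $T$ replaces $\|T\|\,\|T^{-1}\|$ by $d(X,Y)$, which is exactly the claim; the ``in particular'' statement is the case $k_{1}=\cdots=k_{n}=1$. The argument is essentially bookkeeping, so there is no deep obstacle; the only points that need care are verifying that the homogeneity exponents combine to give precisely the $m$-th power of $\|T\|\,\|T^{-1}\|$ (rather than some other power), and checking that the per-$T$ inequality, being uniform in $L$, survives the passage to the infimum over $T$.
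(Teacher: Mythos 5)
Your argument is correct and is precisely the standard transport argument the paper has in mind (it omits the proof, referring to Lemma 12 of \cite{BST}): pull $L$ back through an onto isomorphism $T$, use homogeneity to pick up $\|T\|^{m}$ on the polynomial norm and multilinearity to pick up $\|T^{-1}\|^{m}$ on the mixed values, then take the infimum over $T$. The bookkeeping of exponents is right, and the passage to the infimum over $T$ is harmless since the per-$T$ bound is uniform in $L$ and $t\mapsto t^{m}$ is increasing and continuous.
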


The proof of Lemma \ref{BM-distance} is quite similar to the proof of lemma $12$ in \cite{BST}, we spare the reader the details.

In the case of $L^p(\mu)$ spaces, Lemma \ref{BM-distance} allows us to give an estimate for $\mathbb{K}(k_1, \ldots, k_n; L^p(\mu))$.

\begin{proposition}\label{1st-estimate}
Let $k_1, \ldots, k_n$ be nonnegative integers whose sum is $m$. Then, for the $L^p(\mu)$ space,
$1\leq p\leq \infty$, we have
\begin{equation}\label{1st-estimate1}
\mathbb{K}(k_1, \ldots, k_n; L^p(\mu))\leq n^{m|\frac{1}{2}-\frac{1}{p}|}\,.
\end{equation}
If $X$ is any Banach space
\begin{equation}\label{1st-estimate2}
\mathbb{K}(k_1, \ldots, k_n; X)\leq n^{\frac{m}{2}}\,.
\end{equation}
In particular, if $k_1= \cdots =k_n=1$, then $n=m$ and we have
\begin{equation}\label{1st-estimate3}
\mathbb{K}(m, L^p(\mu))\leq m^{m|\frac{1}{2}-\frac{1}{p}|}
\end{equation}
and
\begin{equation}\label{1st-estimate4}
\mathbb{K}(m; X)\leq m^{\frac{m}{2}}\,.
\end{equation}
\end{proposition}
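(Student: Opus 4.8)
The plan is to reduce everything to a finite-dimensional subspace and then compare it with a Euclidean space through Lemma~\ref{BM-distance}, exploiting that the $m$-th polarization constant of a Hilbert space equals $1$.

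First I would fix $L\in\mathcal{L}^{s}({}^{m}X)$ and vectors $x_{1},\dots,x_{n}\in B_{X}$, and set $E=\mathrm{span}\{x_{1},\dots,x_{n}\}$, so that $d:=\dim E\le n$. Since $B_{E}=E\cap B_{X}\subseteq B_{X}$, the restriction $\Lh|_{E}$ is the $m$-homogeneous polynomial associated with $L|_{E}$ and satisfies $\|\Lh|_{E}\|_{B_{E}}\le\|\Lh\|$, while $x_{1},\dots,x_{n}\in B_{E}$. Hence, directly from the definition of $\mathbb{K}(k_{1},\dots,k_{n};E)$,
\[
|L(x_{1}^{k_{1}}\cdots x_{n}^{k_{n}})|\le \mathbb{K}(k_{1},\dots,k_{n};E)\,\|\Lh\|.
\]
Taking the supremum over $x_{1},\dots,x_{n}\in B_{X}$, it remains to bound $\mathbb{K}(k_{1},\dots,k_{n};E)$ uniformly over all subspaces $E\subseteq X$ with $\dim E\le n$.

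Next, since $E$ is finite-dimensional I would compare it with $\ell_{2}^{d}$. Applying Lemma~\ref{BM-distance} with the roles $X=\ell_{2}^{d}$ and $Y=E$, together with $\mathbb{K}(k_{1},\dots,k_{n};\ell_{2}^{d})=1$ (a Hilbert space), gives
\[
\mathbb{K}(k_{1},\dots,k_{n};E)\le \bigl(d(E,\ell_{2}^{d})\bigr)^{m}.
\]
Thus everything reduces to estimating the Banach--Mazur distance $d(E,\ell_{2}^{d})$. For an arbitrary Banach space $X$, John's theorem yields $d(E,\ell_{2}^{d})\le\sqrt{d}\le\sqrt{n}$, which gives \eqref{1st-estimate2}; for $X=L^{p}(\mu)$, $E$ is an at most $n$-dimensional subspace of $L^{p}(\mu)$, and the (sharp) local-theory estimate $d(E,\ell_{2}^{d})\le d^{|1/2-1/p|}\le n^{|1/2-1/p|}$ gives \eqref{1st-estimate1}. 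The special cases \eqref{1st-estimate3} and \eqref{1st-estimate4} follow by setting $n=m$ and $k_{1}=\cdots=k_{n}=1$.

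The finite-dimensional reduction and the Hilbert-space comparison are routine, so the main obstacle is the distance estimate for subspaces of $L^{p}(\mu)$, namely $d(E,\ell_{2}^{\dim E})\le(\dim E)^{|1/2-1/p|}$. This is precisely where interpolation theory of linear operators enters: viewing $L^{p}$ as a complex interpolation space between $L^{2}$ (where the distance is $1$) and $L^{1}$ or $L^{\infty}$ (where John's theorem gives $\sqrt{\dim E}$), one interpolates the norm of the formal identity between the $L^{p}$ and the Euclidean structures on $E$, the exponent $|1/2-1/p|$ being exactly the interpolated value. The delicate point will be to obtain this clean, constant-free exponent rather than the $p$-dependent multiplicative constant that a crude Kwapie\'n-type bound $d(E,\ell_{2}^{\dim E})\le T_{2}(E)\,C_{2}(E)$ would produce.
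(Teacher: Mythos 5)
Your proposal is correct and follows essentially the same route as the paper's own proof: restrict to the span of the test vectors, invoke Lemma~\ref{BM-distance} together with $\mathbb{K}(k_1,\ldots,k_n;\ell^2_d)=1$, and finish with the Lewis/John Banach--Mazur distance estimates $d(E,\ell^2_d)\le d^{|1/2-1/p|}$ and $d(E,\ell^2_d)\le\sqrt{d}$ (the paper cites these rather than reproving them, so your closing interpolation sketch is not needed). The only cosmetic differences are that you take the supremum directly instead of the paper's $\varepsilon$-argument, and you handle $\dim E<n$ explicitly, which the paper glosses over.
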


\begin{proof}
Let $\varepsilon>0$. Choose $L\in\mathcal{L}^{s}(^{m} L^{p}(\mu); \mathbb{K})$ and $f_{i}\in L^{p}(\mu)$, $\|f_{i}\|_{p}\leq 1$, $i=1, \ldots, n$, such that
\[
|L(f_{1}^{k_1} \ldots f_{n}^{k_n})|\geq(\mathbb{K}(k_1, \ldots, k_n; L^p(\mu))-\varepsilon)\|\Lh\|\,.
\qquad\qquad\qquad\qquad(\ast)
\]
If $N:=\mathrm{span}\{f_{1},\ldots, f_{n}\}$, then $N$ is an $n$-dimensional subspace of $L^p(\mu)$ and from \cite{Lewis}, see also corollary III.B.9 in Wojtaszczyk's book \cite{Woj}, $d(N, \ell_n^{2})\leq n^{|\frac{1}{2}-\frac{1}{p}|}$, where $\ell_n^{2}$ is the $n$-dimensional Hilbert space. If $F=L\left|_{N^m}\right.$, then $\Fh$ is the restriction of $\Lh$ to $N$. Since $\|\Fh\|\leq\|\Lh\|$, it follows from ($\ast$) that
\[
\mathbb{K}(k_1, \ldots, k_n; N)\|\Fh\|\geq|F(f_{1}^{k_1} \ldots f_{n}^{k_n})|\geq\mathbb{K}(k_1, \ldots, k_n; L^p(\mu))-\varepsilon)\|\Fh\|
\]
and therefore
\[
\mathbb{K}(k_1, \ldots, k_n; L^p(\mu))-\varepsilon\leq\mathbb{K}(k_1, \ldots, k_n; N)\,. \qquad\qquad\qquad\qquad(\ast\ast)
\]
But $\mathbb{K}(k_1, \ldots, k_n; \ell_n^{2})=1$ and from Lemma \ref{BM-distance}
\[
\mathbb{K}(k_1, \ldots, k_n; N)\leq d(N, \ell_n^{2})^{m}\,\mathbb{K}(k_1, \ldots, k_n; \ell_n^{2}))
\leq n^{m|\frac{1}{2}-\frac{1}{p}|}\,.
\]
Hence, inequality ($\ast\ast$) implies
\[
\mathbb{K}(k_1, \ldots, k_n; L^p(\mu))-\varepsilon\leq n^{m|\frac{1}{2}-\frac{1}{p}|}
\]
and this proves \eqref{1st-estimate1}.

Since for any $n$-dimensional subspace $N$ of a Banach space $X$ we have $d(N, \ell_n^{2})\leq \sqrt{n}$ (see corollary III.B.9 in \cite{Woj}), the proof of \eqref{1st-estimate2} is quite similar.
\end{proof}

The estimates for the polarization constants in Proposition \ref{1st-estimate} are not in principle the optimal ones. Indeed, the upper bounds that we get as in \eqref{1st-estimate3} and \eqref{1st-estimate4} are not as sharp as the estimate $\frac{m^{m}}{m!}$; see inequality \eqref{polarrange} in the Introduction. However, since $\mathbb{K}(m; H)=1$ for any Hilbert space $H$, we have  $\mathbb{K}(m; L^2(\mu))=1$ and for $p$ close to $2$ the constant in \eqref{1st-estimate3} is close to optimal. Therefore, for $p$ close to $2$ we improve the estimate given in proposition $3.7$ in \cite{ChSar}.

\begin{proposition}$($\cite[proposition 3.7]{ChSar}$)$\label{gpolar-constants}
For the $m$th polarization constant $\mathbb{K}(m, p)$, $m\geq 2$, we have the estimates
\begin{equation}\label{gpolar-constants-1}
\mathbb{K}(m; p)\leq \begin{cases} \frac{m^{m/p}}{m!} &\text{$1\leq p\leq m'$ ,}\\
\min\{\frac{m^{m/m'}}{m!},\, m^{m|p-2|/2p}\} &\text{$m'\leq p\leq m$ ,}\\
\frac{m^{m/p'}}{m!} &\text{$m\leq p\leq\infty$ .}
                                \end{cases}
\end{equation}
In the case $1\leq p\leq m'$ the estimate is best possible.
\end{proposition}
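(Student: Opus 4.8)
The plan is to patch together three bounds: two sharp Harris-type inequalities valid on the outer ranges, the interpolation estimate of Proposition \ref{1st-estimate} on the middle range, and the monotonicity of $p\mapsto\mathbb{K}(m;p)$ recorded in Section 2. The analytic core is the pair
\[
\mathbb{K}(m;p)\le\frac{m^{m/p}}{m!}\ \ (1\le p\le m'),\qquad \mathbb{K}(m;p)\le\frac{m^{m/p'}}{m!}\ \ (m\le p\le\infty),
\]
which I would approach through the polarization formula \eqref{polarformula}. For $f_1,\dots,f_m\in B_{L^p(\mu)}$ and $G(t)=\sum_{n=1}^{m}r_n(t)f_n$, \eqref{polarformula} yields $|L(f_1,\dots,f_m)|\le\frac{1}{m!}\int_0^1\|\Lh\|\,\|G(t)\|_p^m\,dt$, so one is reduced to controlling $\int_0^1\|G(t)\|_p^m\,dt$. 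I expect this to be the main obstacle. The naive estimates---bounding $\|G(t)\|_p\le\sum_i\|f_i\|_p\le m$ pointwise, or passing to second moments through Khintchine's inequality---produce the wrong power of $m$ (respectively $m^m$, and a Khintchine constant times $m^{m/2}$). Extracting the sharp exponent $m^{m/p}$ needs the genuinely $L^p$-flavoured passage from the diagonal to the full multilinear form, which is exactly where the threshold $p=m'$ (equivalently $p'=m$) enters; I would realise it by a Stein/three-lines complex-interpolation argument applied to an analytic family of $m$-linear forms manufactured from the $f_i$, or else simply cite Harris \cite{H1} for these two inequalities. The dual range $m\le p\le\infty$ is the conjugate statement.

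On the middle range $m'\le p\le m$ the sharp first-case bound is no longer available (for $m\ge 3$ it is already violated at $p=2$, where $\mathbb{K}(m;2)=1$ exceeds $m^{m/p}/m!$). Instead I would propagate the flat value $\frac{m^{m/m'}}{m!}$---the common value of the two Harris bounds at the endpoints $p=m'$ and $p=m$---across the whole interval using the monotonicity of $\mathbb{K}(m;\cdot)$ coming from the isometric embeddings of Section 2: these give that $\mathbb{K}(m;\cdot)$ decreases on $[1,2]$ and increases on $[2,\infty)$, so on $[m',m]$, which straddles $2$, it is dominated by its larger endpoint value $\frac{m^{m/m'}}{m!}$. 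Independently, Proposition \ref{1st-estimate} supplies $\mathbb{K}(m;p)\le m^{m|\frac{1}{2}-\frac{1}{p}|}=m^{m|p-2|/2p}$. The stated bound is the minimum of these two: the interpolation term wins near $p=2$ (where it equals $1$), while the flat Harris term wins near the endpoints.

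Finally, for optimality on $1\le p\le m'$ I would test against the product polynomial $\Lh(x)=x_1\cdots x_m$ on the subspace $\ell_m^{p}\subset L^p(\mu)$. By the arithmetic--geometric mean inequality $\|\Lh\|=\max_{\|x\|_p\le1}|x_1\cdots x_m|=m^{-m/p}$, while its symmetric $m$-linear form is the normalised permanent and satisfies $\|L\|\ge|L(e_1,\dots,e_m)|=1/m!$, giving $\mathbb{K}(m;p)\ge m^{m/p}/m!$. Matching the upper bound then amounts to showing $\|L\|=1/m!$, i.e. that the normalised permanent attains its norm at a permutation configuration of the unit vectors; the point of the restriction is that $p=m'$ is precisely the threshold beyond which ``spread-out'' configurations overtake the permutation one and make $\|L\|$ strictly larger. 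Establishing this threshold is the delicate part of the optimality argument, but on $1\le p\le m'$ it yields $\mathbb{K}(m;p)=m^{m/p}/m!$, as claimed.
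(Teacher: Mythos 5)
Your architecture is essentially the right one, and it matches how the paper assembles this result (which it quotes from \cite{ChSar} rather than reproving): the two outer cases are \eqref{Mthm-1} with $k_1=\cdots=k_n=1$, the term $m^{m|p-2|/2p}=m^{m|\frac12-\frac1p|}$ in the middle case is exactly \eqref{1st-estimate3}, and sharpness for $1\le p\le m'$ is Example \ref{Mexample}. Your optimality argument is correct and, if anything, over-cautious: since $\|\Lh\|=m^{-m/p}$ exactly (AM--GM, with equality at $m^{-1/p}(1,\dots,1)$) and $\|L\|\ge|L(e_1,\dots,e_m)|=1/m!$, the lower bound $\mathbb{K}(m;p)\ge m^{m/p}/m!$ is already complete; you do not need to identify $\|L\|$ or locate where the permanent attains its norm. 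For the outer ranges, the ``genuinely $L^p$-flavoured'' ingredient you are groping for is not a Stein interpolation of an analytic family of multilinear forms but simply the classical Clarkson inequalities (the real analogue of Lemma \ref{GenClarkson}, i.e.\ Riesz--Thorin applied to the \emph{linear} operator $(f_j)\mapsto\sum_j f_jr_j(t)$): for $1\le p\le2$ one gets $\bigl(\int_0^1\|G(t)\|_p^{p'}\,dt\bigr)^{1/p'}\le\bigl(\sum_j\|f_j\|_p^p\bigr)^{1/p}\le m^{1/p}$, and then Jensen lifts the $p'$-th moment to the $m$-th moment precisely when $m\le p'$, i.e.\ $p\le m'$ --- that is where the threshold enters. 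This route also covers the real case, which an appeal to Harris \cite{H1} (a complex-variables argument) would not, and the proposition is stated for $\mathbb{K}=\mathbb{R}$ or $\mathbb{C}$.

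The genuine gap is your treatment of the flat bound $m^{m/m'}/m!$ on $[m',2)$. The monotonicity supplied by Section 2 is only that $\mathbb{K}(m;p)$ increases on $[2,\infty)$: the isometric embedding $L^p\hookrightarrow L^r$ for $1\le r\le p\le2$ controls polarization constants of \emph{quotients of the duals}, which is why the conclusion is phrased for the conjugate exponents. It gives nothing about subspaces on $[1,2]$ itself, because a symmetric $m$-linear form on a subspace need not extend to the ambient space with comparable norm, and the copy of $L^p$ inside $L^r$ is not norm-one complemented; so ``$\mathbb{K}(m;\cdot)$ decreases on $[1,2]$'' is an unproved (and, as far as the paper's tools go, unavailable) assertion, and your argument covers only the sub-interval $[2,m]$. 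The repair is to stay with the polarization formula \eqref{polarformula}: for $m'\le p\le2$, writing $G(t)=\sum_j r_j(t)f_j$ and using the pointwise bound $\|G(t)\|_p\le m$ on the excess power together with Clarkson,
\begin{equation*}
\int_0^1\|G(t)\|_p^m\,dt\;\le\;m^{\,m-p'}\int_0^1\|G(t)\|_p^{p'}\,dt\;\le\;m^{\,m-p'}\,m^{\,p'/p}\;=\;m^{\,m-1},
\end{equation*}
whence $\|L\|\le\frac{m^{m-1}}{m!}\|\Lh\|=\frac{m^{m/m'}}{m!}\|\Lh\|$; the symmetric computation with the second Clarkson inequality handles $2\le p\le m$ without invoking monotonicity at all.
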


Here, as usual, $m'=m/(m-1)$ and $p'=p/(p-1)$ are the conjugate exponents of $m$ and $p$, respectively.
Harris\cite{H1} showed that for a complex $L^p(\mu)$ space with $1\leq p\leq\infty$
\begin{equation}\label{H-1}
\mathbb{C}(m; p)\leq\left(\frac{m^{m}}{m!}\right)^{\frac{|p-2|}{p}}
\end{equation}
provided that $m$ is a power of $2$. Harris conjectured that \eqref{H-1} holds for all positive integers $m$. For $1\leq p\leq m'$ and $m\leq p\leq\infty$ and for any real or complex $L^p(\mu)$ space, the constants in \eqref{gpolar-constants-1} improve the constant conjectured by Harris for all positive integers $m$.


\section{\bf Estimates for polynomial norms on \textit{complex} Banach spaces.}

We shall need  the Bochner integral, see \cite{DU}. The basis for this is a measure space $(\Omega, \Sigma, \mu)$ and a Banach space X. A function $s:\Omega\rightarrow X$ is called {\it simple} if there exist $x_1, \ldots, x_n \in X$ and  $E_1, \ldots, E_n \in \Sigma$ such that $s=\sum_{i=1}^{n} x_{i}\chi_{E_i}$, where $\chi_{E_i}(\omega)=1$ if $\omega\in E_i$ and $\chi_{E_i}(\omega)=0$ if $\omega\notin E_i$. A function $f: \Omega\rightarrow X$ is called {\it $\mu$-measurable} if there exist a sequence of simple functions $(s_{n})$ with $\lim_{n\rightarrow \infty} \|s_{n}-f\|=0$. A  $\mu$-measurable function $f: \Omega\rightarrow X$ is {\it $\mu$-Bochner integrable} if
$\int_{\Omega} \|f\|\, d\mu<\infty$.

If $1\leq p<\infty$, the symbol $L_{\mu}^{p}(\Omega, \Sigma, \mu, X) (=L_{\mu}^{p}(X))$ will stand for all(equivalence classes of) $\mu$-Bochner integrable functions $f: \Omega\rightarrow X$ such that
\[
\|f\|_{p}=\left(\int_{\Omega} \|f\|^{p}\, d\mu\right)^{1/p}<\infty\,.
\]
Normed by the functional $\|\cdot\|_p$ defined above $L_{\mu}^{p}(X)$ becomes a Banach space. The symbol $L_{\mu}^{\infty}(\Omega, \Sigma, \mu, X) (=L_{\mu}^{\infty}(X))$ will stand for all(equivalence classes of) essentially bounded $\mu$-Bochner integrable functions $f: \Omega \to X$. Normed by the functional $\|\cdot\|_{\infty}$ defined for $f\in L_{\mu}^{\infty}(X)$ by
\[
\|f\|_{\infty}=\text{ess} \sup\{\|f(\omega)\|:\, \omega\in\Omega\}<\infty\,,
\]
$L_{\mu}^{\infty}(X)$ becomes a Banach space. The completion in $L_{\mu}^{\infty}(X)$ of the simple functions $s=\sum_{i=1}^{n} x_{i}\chi_{E_i}$ with $\mu(E_i)<\infty$ for every $i=1, \ldots, n$, is denoted by $L_{\mu}^{\infty, 0}(X)$.

When $X=\mathbb{K}$, $\mathbb{K}=\mathbb{R}$ or $\mathbb{C}$, we use the symbol $L^{p}(\mu)$ for $L_{\mu}^{p}(X)$, $1\leq p\leq\infty$. If $(\Omega, \Sigma, \mu)$ is the usual Lebesgue measure on
$[0, 1]$ we denote $L^{p}(\mu)$ by $L^{p}[0, 1]$.

Now we consider one special case: Given any set $\Gamma$, we define $\ell^{p}(\Gamma)=L^{p}(\Gamma, 2^{\Gamma}, \mu)$, where $\mu$ is counting measure on $\Gamma$. What this means is that we identify functions $f: \Gamma\rightarrow \mathbb{K}$ with ``sequences" $x=(x_{\gamma})$ in the usual way:
$x_{\gamma}=f(\gamma)$, and we define
\[
\|x\|_{p}=\bigg(\sum_{\gamma \in\Gamma} |x_{\gamma}|^{p}\bigg)^{1/p}=\left(\int_{\Gamma} |f(\gamma)|^{p}\, d\mu(\gamma)\right)^{1/p}=\|f\|_{p}\,.
\]
Please note that if $x\in \ell^{p}(\Gamma)$, then $x_{\gamma}=0$ for all but countably many $\gamma$. For $p=\infty$, we set
\[
\|x\|_{\infty}=\sup_{\gamma \in\Gamma} |x_{\gamma}|=\sup_{\gamma \in\Gamma} |f(\gamma)|=\|f\|_{\infty}\,.
\]
If $\Gamma=\mathbb{N}$, we denote $\ell^{p}(\Gamma)$ by $\ell^{p}$. We write $\ell_{n}^{p}$ to denote
$\mathbb{K}^{n}$ under the $\ell^{p}$ norm.
\\
\\
We now mention a formula that will be useful to us.

\textit{Binomial formula}: Let $X$ be a Banach space and let $x_1, \ldots, x_n\in X$. Then for any $L\in\mathcal{L}^{s}({}^{m}X; \mathbb{K})$
\begin{equation}\label{Binomial}
\Lh(x_1+ \cdots +x_n)=\sum_{j_1+ \cdots +j_n=m} \frac{m!}{j_{1}! \cdots j_{n}!} L(x_{1}^{j_1}\ldots x_{n}^{j_n})\,.
\end{equation}

Recall the classical Clarkson inequalities: If $f_{1}\ldots f_{n}\in L^{p}(\mu)$, then
\begin{eqnarray}\label{GenClarkson}
\begin{split}
&\bigg(\int_{0}^{1} \big\|\sum_{j=1}^{n} f_{j}r_{j}(t)\big\|_{p}^{p'}\, dt\bigg)^{1/p'} &\leq& \bigg(\sum_{j=1}^{n} \|f_{j}\|_{p}^{p}\bigg)^{1/p}\qquad &\text{for}\quad 1\leq p\leq 2\,,\\
&\text{and}\\
&\bigg(\int_{0}^{1} \big\|\sum_{j=1}^{n} f_{j}r_{j}(t)\big\|_{p}^{p}\, dt\bigg)^{1/p} &\leq& \bigg(\sum_{j=1}^{n} \|f_{j}\|_{p}^{p'}\bigg)^{1/p'}\qquad &\text{for}\quad 2\leq p<\infty\,.
\end{split}
\end{eqnarray}
We refer to \cite{WW} for this and other similar $L^{p}$-inequalities.

Now we prove a \textit{generalized Clarkson inequality} which is a standard type of interpolation lemma.

\begin{lemma}\label{GenClarkson}
Let $f_{1}\ldots f_{n}\in L^{p}(\mu)$ and let $\la$ be Haar measure on $\mathbb{T}^n$, the $n$-fold product of the circle group $\mathbb{T}=\{z\in\mathbb{C}:\, |z|=1\}$. Thus $d\la(\theta)=(1/2\pi)^{n}d\theta_{1}\cdots d\theta_{n}$. Then
\begin{eqnarray}\label{GenClarkson-1}
\begin{split}
&\bigg(\,\int\limits_{\mathbb{T}^n} \big\|\sum_{j=1}^{n} f_{j}e^{i\theta_j}\big\|_{p}^{p'}\, d\la(\theta)\bigg)^{1/p'} &\leq& \bigg(\sum_{j=1}^{n} \|f_{j}\|_{p}^{p}\bigg)^{1/p} \qquad &\text{for}\quad 1\leq p\leq 2\,,\\
&\text{and}\\
&\bigg(\,\int\limits_{\mathbb{T}^n} \big\|\sum_{j=1}^{n} f_{j}e^{i\theta_j}\big\|_{p}^{p}\, d\la(\theta)\bigg)^{1/p} &\leq& \bigg(\sum_{j=1}^{n} \|f_{j}\|_{p}^{p'}\bigg)^{1/p'}\qquad &\text{for}\quad 2\leq p<\infty\,.
\end{split}
\end{eqnarray}
\end{lemma}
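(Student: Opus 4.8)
The plan is to recognise both inequalities as two instances of a single operator-norm estimate proved by complex interpolation, which is exactly what the hint (``a standard type of interpolation lemma'') points to. Define the linear operator
$$
T\colon (f_1,\ldots,f_n)\longmapsto \Big[(\theta,\omega)\mapsto \sum_{j=1}^n f_j(\omega)\,e^{i\theta_j}\Big]\,,
$$
sending an $n$-tuple of functions on $\Omega$ to an $L^p(\mu)$-valued function on $\mathbb{T}^n$. The right-hand sides of the two inequalities are the norms of $(f_j)_j$ in the mixed-norm sequence spaces $\ell^p_n(L^p(\mu))$ and $\ell^{p'}_n(L^p(\mu))$, while the left-hand sides are the norms of $T(f_j)_j$ in the vector-valued spaces $L^{p'}(\mathbb{T}^n;L^p(\mu))$ and $L^{p}(\mathbb{T}^n;L^p(\mu))$. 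Thus each inequality asserts $\|T\|\le 1$ between the appropriate pair of spaces, and I would obtain both by interpolating between endpoints where the bound is transparent.

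The common endpoint is $p=2$. Here $p'=2$, and since $L^2(\mu)$ is a Hilbert space and the characters $\{e^{i\theta_j}\}_{j=1}^n$ are orthonormal on $(\mathbb{T}^n,\la)$, Fubini's theorem gives
$$
\int_{\mathbb{T}^n}\Big\|\sum_j f_j e^{i\theta_j}\Big\|_2^2\,d\la(\theta)
=\int_\Omega\int_{\mathbb{T}^n}\Big|\sum_j f_j(\omega)e^{i\theta_j}\Big|^2 d\la(\theta)\,d\mu(\omega)
=\sum_j\|f_j\|_2^2\,,
$$
so $T$ is an isometry at $p=2$. For the first inequality the other endpoint is $p=1$: by the triangle inequality $\big\|\sum_j f_j e^{i\theta_j}\big\|_1\le\sum_j\|f_j\|_1$ uniformly in $\theta$, so $T\colon \ell^1_n(L^1)\to L^\infty(\mathbb{T}^n;L^1)$ has norm $\le 1$. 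For the second inequality the other endpoint is $p=\infty$ (so $p'=1$): again by the triangle inequality $T\colon \ell^1_n(L^\infty)\to L^\infty(\mathbb{T}^n;L^\infty)$ has norm $\le 1$.

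Next I would feed these endpoint bounds into the complex interpolation method. For $1\le p\le 2$, choosing $\vartheta\in[0,1]$ with $\tfrac1p=1-\tfrac\vartheta2$, the standard identifications of complex interpolation spaces for mixed-norm and vector-valued $L^q$ spaces give $[\ell^1_n(L^1),\ell^2_n(L^2)]_\vartheta=\ell^p_n(L^p)$ on the domain side and $[L^\infty(\mathbb{T}^n;L^1),L^2(\mathbb{T}^n;L^2)]_\vartheta=L^{p'}(\mathbb{T}^n;L^p)$ on the target side, where one checks that the outer exponent is $2/\vartheta=p'$ and the inner exponent is $2/(2-\vartheta)=p$. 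Hence $\|T\|\le 1^{1-\vartheta}1^{\vartheta}=1$ between these spaces, which is exactly the first inequality. For $2\le p<\infty$ I would interpolate the $p=2$ and $p=\infty$ endpoints with $\vartheta=(p-2)/p$: the same identifications give domain $[\ell^2_n(L^2),\ell^1_n(L^\infty)]_\vartheta=\ell^{p'}_n(L^p)$ and target $[L^2(\mathbb{T}^n;L^2),L^\infty(\mathbb{T}^n;L^\infty)]_\vartheta=L^{p}(\mathbb{T}^n;L^p)$, yielding the second inequality.

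The main obstacle is entirely technical: justifying the complex-interpolation identifications of the mixed-norm spaces $\ell^a_n(L^b(\mu))$ and the Bochner spaces $L^q(\mathbb{T}^n;L^b(\mu))$, together with the care needed at the $L^\infty$ endpoint, where one restricts to a dense class (e.g.\ simple functions) before applying the interpolation theorem. Since $n$ is finite the $\ell^a_n$ factor is unproblematic, and the vector-valued $L^q$ identifications are classical (Calder\'on and Benedek--Panzone). The orthogonality computation and the triangle-inequality endpoints are routine, so essentially all the content lies in matching exponents, which the computation above arranges so that the target exponent is always the conjugate demanded by the statement.
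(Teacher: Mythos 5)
Your proposal is correct and follows essentially the same route as the paper: the authors also realise both inequalities as $\|T\|\le 1$ for the operator $f\mapsto\sum_j f_j e^{i\theta_j}$, checking the isometry at the $\ell_n^2(L^2)\to L_\la^2(L^2)$ endpoint and the triangle-inequality bound at $\ell_n^1(L^1)\to L_\la^{\infty,0}(L^1)$ (resp.\ $\ell_n^1(L^\infty)\to L_\la^{\infty,0}(L^\infty)$), and then invoke the extended Riesz--Thorin theorem for mixed-norm and vector-valued $L^p$ spaces from Bergh--L\"ofstr\"om. Your exponent bookkeeping and your remark about restricting to simple functions at the $L^\infty$ endpoint match the paper's use of $L_\la^{\infty,0}$.
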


\begin{proof}
Consider the linear operator
\[
T: \ell_{n}^{2}(L^{2}(\mu))\longrightarrow L_{\la}^{2}(L^{2}(\mu))
\]
defined by
\[
T: f=(f_{1}, \ldots f_{n})\longmapsto f_{1}e^{i\theta_1}+\cdots +f_{n}e^{i\theta_n}\qquad\qquad\qquad \qquad (\ast)
\]
where $f_{j}\in L^{2}(\mu)$, $j=1, \ldots, n$. We have
\[
\|Tf\|=\bigg(\,\int\limits_{\mathbb{T}^n} \big\|\sum_{j=1}^{n} f_{j}e^{i\theta_j}\big\|_{2}^{2}\, d\la(\theta)\bigg)^{1/2}= \bigg(\sum_{j=1}^{n} \|f_{j}\|_{2}^{2}\bigg)^{1/2}\,.
\]
Now if the linear operator
\[
T: \ell_{n}^{1}(L^{1}(\mu))\longrightarrow L_{\la}^{\infty, 0}(L^{1}(\mu))
\]
is defined as in ($\ast$), where $f_{j}\in L^{1}(\mu)$, $j=1, \ldots, n$, we have
\[
\|Tf\|=\sup_{\theta_{1}, \ldots,\theta_{n}} \bigg\|\sum_{j=1}^{n} f_{j}e^{i\theta_j}\bigg\|_{1}\leq
\sum_{j=1}^{n} \|f_{j}\|_{1}\,.
\]
Hence by an extended version of the Riesz-Thorin interpolation theorem \cite[theorems 4.1.2, 5.1.1, 5.1.2]{BL} the first inequality in \eqref{GenClarkson-1} holds for $1\leq p\leq 2$.

The proof of the second inequality in \eqref{GenClarkson-1} is similar. Simply, instead of the linear operator $T: \ell_{n}^{1}(L^{1}(\mu))\longrightarrow L_{\la}^{\infty, 0}(L^{1}(\mu))$ consider the linear operator
\[
T: \ell_{n}^{1}(L^{\infty}(\mu))\longrightarrow L_{\la}^{\infty, 0}(L^{\infty}(\mu))\,,
\]
where $f_{j}\in L^{\infty}(\mu)$, $j=1, \ldots, n$, with
\[
\|Tf\|=\sup_{\theta_{1}, \ldots,\theta_{n}} \bigg\|\sum_{j=1}^{n} f_{j}e^{i\theta_j}\bigg\|_{\infty} \leq\sum_{j=1}^{n} \|f_{j}\|_{\infty}\,.
\]

\end{proof}

The first inequality in \eqref{GenClarkson-1} is lemma I.2.4 in \cite{Sar1} or lemma 1 in \cite{Sar2}. We use inequality\eqref{GenClarkson-1} in the proof of the following theorem.

\begin{theorem}\label{Mthm}
Let $k_1, \ldots, k_n$ be nonnegative integers whose sum is $m$. Then for any complex $L^{p}(\mu)$ space
\begin{equation}\label{Mthm-1}
\mathbb{C}(k_1, \ldots, k_n; L^{p}(\mu))\leq \begin{cases} \left(\frac{m^{m}}{k_{1}^{k_1}\cdots k_{n}^{k_n}}\right)^{1/p}\,\frac{k_{1}!\cdots k_{n}!}{m!} &\text{$1\leq p\leq m'$ ,}\\
\left(\frac{m^{m}}{k_{1}^{k_1}\cdots k_{n}^{k_n}}\right)^{1/p'}\,\frac{k_{1}!\cdots k_{n}!}{m!} &\text{$m\leq p\leq\infty$ .}
                                \end{cases}
\end{equation}
In the case $1\leq p\leq m'$ the estimate is best possible.
\end{theorem}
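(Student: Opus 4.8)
The plan is to recover $L(f_1^{k_1}\cdots f_n^{k_n})$ as a single Fourier coefficient of the polynomial $\Lh$ restricted to a torus of phases, and then to estimate that coefficient by the generalized Clarkson inequality \eqref{GenClarkson-1}. Fix $L\in\mathcal{L}^{s}({}^{m}L^{p}(\mu);\mathbb{C})$ and $f_1,\dots,f_n\in L^p(\mu)$ with $\|f_i\|_p\le 1$, and introduce free scaling parameters $c_1,\dots,c_n>0$ (these are the source of the sharp constant). Applying the binomial formula \eqref{Binomial} to $x_j=c_jf_je^{i\theta_j}$ and using multilinearity gives
\[
\Lh\Big(\sum_{j=1}^{n}c_jf_je^{i\theta_j}\Big)=\sum_{j_1+\cdots+j_n=m}\frac{m!}{j_1!\cdots j_n!}\,c_1^{j_1}\cdots c_n^{j_n}\,L(f_1^{j_1}\cdots f_n^{j_n})\,e^{i(j_1\theta_1+\cdots+j_n\theta_n)}.
\]
Multiplying by $e^{-i(k_1\theta_1+\cdots+k_n\theta_n)}$ and integrating against Haar measure $\la$ on $\mathbb{T}^n$, orthogonality of the characters annihilates every term except $(j_1,\dots,j_n)=(k_1,\dots,k_n)$, so that
\[
\frac{m!}{k_1!\cdots k_n!}\,c_1^{k_1}\cdots c_n^{k_n}\,L(f_1^{k_1}\cdots f_n^{k_n})=\int_{\mathbb{T}^n}\Lh\Big(\sum_{j=1}^{n}c_jf_je^{i\theta_j}\Big)e^{-i(k_1\theta_1+\cdots+k_n\theta_n)}\,d\la(\theta).
\]

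Taking absolute values and using the homogeneity bound $|\Lh(g)|\le\|\Lh\|\,\|g\|_p^m$, the problem reduces to estimating $\int_{\mathbb{T}^n}\big\|\sum_j c_jf_je^{i\theta_j}\big\|_p^m\,d\la$, and this is where the two ranges diverge. For $1\le p\le m'$ one has $p\le 2$ and $p'\ge m$; since $\la$ is a probability measure, monotonicity of the $L^q(\la)$-norms gives $\big(\int\|\cdot\|_p^m\,d\la\big)^{1/m}\le\big(\int\|\cdot\|_p^{p'}\,d\la\big)^{1/p'}$, and the first inequality in \eqref{GenClarkson-1} bounds the right-hand side by $\big(\sum_j\|c_jf_j\|_p^p\big)^{1/p}\le\big(\sum_j c_j^p\big)^{1/p}$. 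For $m\le p\le\infty$ one has $p\ge m$, so the same monotonicity gives $\big(\int\|\cdot\|_p^m\,d\la\big)^{1/m}\le\big(\int\|\cdot\|_p^p\,d\la\big)^{1/p}$, and the second inequality in \eqref{GenClarkson-1} bounds this by $\big(\sum_j c_j^{p'}\big)^{1/p'}$ (with $p=\infty$ handled by the elementary pointwise estimate $\big\|\sum_j c_jf_je^{i\theta_j}\big\|_\infty\le\sum_j c_j$). In either regime we obtain
\[
|L(f_1^{k_1}\cdots f_n^{k_n})|\le\|\Lh\|\,\frac{k_1!\cdots k_n!}{m!}\cdot\frac{\big(\sum_j c_j^{q}\big)^{m/q}}{c_1^{k_1}\cdots c_n^{k_n}},
\]
with $q=p$ in the first range and $q=p'$ in the second. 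Writing $t_j=c_j^{q}$ and using $\sum_j k_j=m$, the ratio equals $\big((\sum_j t_j)^m/\prod_j t_j^{k_j}\big)^{1/q}$, which is scale-invariant; a Lagrange-multiplier (equivalently AM--GM) computation shows it is minimised when $t_j$ is proportional to $k_j$, with minimal value $\big(m^m/\prod_j k_j^{k_j}\big)^{1/q}$. Substituting $q=p$ and $q=p'$ gives precisely the two bounds in \eqref{Mthm-1}; indices with $k_j=0$ contribute nothing and are discarded by letting the corresponding $c_j\to0$.

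For sharpness in the range $1\le p\le m'$, I would test against the product polynomial on $\ell^p$. Partition $m$ coordinates into blocks $B_1,\dots,B_n$ with $|B_j|=k_j$, let $\Lh(x)=x_1x_2\cdots x_m$ (the product over all $m$ coordinates), with associated symmetric form $L(y^{(1)},\dots,y^{(m)})=\frac1{m!}\sum_{\sigma\in S_m}\prod_{s=1}^{m}y^{(s)}_{\sigma(s)}$, and take $f_j=k_j^{-1/p}\sum_{i\in B_j}e_i$, so that $\|f_j\|_p=1$. Counting the permutations that map the $k_j$ slots of block $j$ bijectively onto $B_j$ yields $L(f_1^{k_1}\cdots f_n^{k_n})=\frac{k_1!\cdots k_n!}{m!}\prod_j k_j^{-k_j/p}$, while AM--GM gives $\|\Lh\|=m^{-m/p}$; their quotient is exactly $\frac{k_1!\cdots k_n!}{m!}\big(m^m/\prod_j k_j^{k_j}\big)^{1/p}$, matching the upper bound.

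The step requiring the most care is the passage that produces the sharp factor $m^m/\prod_j k_j^{k_j}$ rather than a crude bound such as $n^{m/p}$: this is exactly where the scaling parameters $c_j$ and their optimisation are indispensable, since equal weights would forfeit the geometric-mean gain encoded by AM--GM. One must also invoke the interpolation input \eqref{GenClarkson-1} and the norm monotonicity on the probability space $\mathbb{T}^n$ in the correct regime for each range (the two ranges are precisely $p'\ge m$ and $p\ge m$), and verify that the extremal example reproduces both $\|\Lh\|$ and the tested value exactly so that the estimate cannot be improved for $1\le p\le m'$.
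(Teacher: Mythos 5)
Your argument is correct and follows essentially the same route as the paper: you extract $L(f_1^{k_1}\cdots f_n^{k_n})$ as the $(k_1,\dots,k_n)$ Fourier coefficient of $\theta\mapsto\Lh(\sum_j c_jf_je^{i\theta_j})$ (the paper does this via Cauchy's differentiation formula, which is the same computation as your character orthogonality), then apply H\"older on the probability space $\mathbb{T}^n$ together with the generalized Clarkson inequality \eqref{GenClarkson-1}, with weights $c_j=(k_j/m)^{1/p}$ resp.\ $(k_j/m)^{1/p'}$ (which the paper posits upfront and you recover by AM--GM optimisation), and your sharpness example is exactly the paper's Example \ref{Mexample}.
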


\begin{proof}
Let $L\in\mathcal{L}^{s}(^{m} L^{p}(\mu); \mathbb{C})$. If $x_{1},\ldots, x_{n}$ are unit vectors in $L^{p}(\mu)$, put $r_j=(k_j/m)^{1/p}$ for $j=1, \ldots, n$ and define
\[
f(z_{1},\ldots, z_{n}):= \Lh(r_{1}x_{1}z_{1}+\cdots +r_{n}x_{n}z_{n})
\]
for $z_{1},\ldots, z_{n}\in\mathbb{C}$. From the multinomial formula we have
\[
f(z_{1},\ldots, z_{n})=\sum_{j_1+ \cdots +j_n=m} \frac{m!}{j_{1}! \cdots j_{n}!}r_{1}^{j_1}\cdots r_{n}^{j_n}L(x_{1}^{j_1}\ldots x_{n}^{j_n})z_{1}^{j_1}\cdots z_{n}^{j_n}\,.
\]
Subsequently partial differentiation yields
\[
\frac{\partial^{k_1 +\cdots+ k_n}}{\partial{z_{1}^{k_1}}\ldots \partial{z_{n}^{k_n}}}f(0,\ldots, 0)
=m!r_{1}^{k_1}\cdots r_{n}^{k_n}L(x_{1}^{k_1}\ldots x_{n}^{k_n})\,.
\]
Since $f$ is a homogeneous polynomial on $\mathbb{C}^{n}$, by Cauchy's differentiation formula
\[
\frac{\partial^{k_1 +\cdots+ k_n}}{\partial{z_{1}^{k_1}}\ldots \partial{z_{n}^{k_n}}}f(0,\ldots, 0)
=\frac{k_{1}!\cdots k_{n}!}{(2\pi i)^{n}}\idotsint\limits_{\mathbb{T}^{n}} \frac{f(z_{1},\ldots, z_{n})}{z_{1}^{k_{1}+1}\ldots z_{n}^{k_{n}+1}}\,dz_{1}\cdots dz_{n}
\]
where $\mathbb{T}^{n}$ is the $n$-fold product of the circle group. So
\[
|L(x_{1}^{k_1}\ldots x_{n}^{k_n})|\leq\frac{1}{r_{1}^{k_1}\cdots r_{n}^{k_n}}\frac{k_{1}!\cdots k_{n}!}{m!}\|\Lh\|\int\limits_{\mathbb{T}^{n}} \big\|\sum_{j=1}^{n} r_{j}x_{j}e^{i\theta_{j}}\big\|_{p}^m\, d\la(\theta)
\]
where $d\la(\theta)=(1/2\pi)^{n}d\theta_{1}\cdots d\theta_{n}$. Now, using H\"{o}lder's inequality
and the first inequality in \eqref{GenClarkson-1}, we obtain the first inequality in \eqref{Mthm-1}.

For the proof of the second inequality in \eqref{Mthm-1}, put $r_j=(k_j/m)^{1/p'}$ for $j=1, \ldots, n$ and define as before $f(z_{1},\ldots, z_{n}):= \Lh(r_{1}x_{1}z_{1}+\cdots +r_{n}x_{n}z_{n})$, for $z_{1},\ldots, z_{n}\in\mathbb{C}$. Now repeat the previous proof using the second inequality in \eqref{GenClarkson-1}.

Finally, Example \ref{Mexample} shows that the constant in the first inequality in \eqref{Mthm-1} is best possible.
\end{proof}

Observe that in the complex case, the first and the third estimate in \eqref{gpolar-constants-1} is just inequality \eqref{Mthm-1} for $k_1= \cdots =k_n=1$. For some other interesting results related to $\mathbb{C}(k_1, \ldots, k_n; L^{p}(\mu))$ we refer to section 4 in \cite{DGR}.

\begin{remark}
The first inequality in \eqref{Mthm-1} is proposition I.2.6 in \cite{Sar1} or theorem $1$ in \cite{Sar2}. In fact, the first inequality in \eqref{Mthm-1} is an improvement of a special case of a result of Harris, see theorem $1$ in \cite{H1}. He showed that if $1\leq p\leq\infty$ and if $X$ is a complex normed space, then the first inequality in \eqref{Mthm-1} holds provided that
\[
\bigg\|\sum_{j=1}^{n} z_{j}x_{j}\bigg\|\leq \bigg(\sum_{j=1}^{n} |z_{j}|^{p}\bigg)^{1/p}\qquad\text{for all}\,\, (z_1, \ldots, z_n)\in \mathbb{C}^{n}\,.
\]
\end{remark}

A classical result of Banach and Mazur \cite{BM} states that every separable Banach space is isometric to a quotient of $\ell^{1}$. Similarly one can prove that given a Banach space $X$, there is a set $\Gamma$ such that $X$ is isometric to a quotient of $\ell^{1}(\Gamma)$. Hence, from Theorem \ref{Mthm} in the special case $p=1$ and from Lemma \ref{projection} we have

\begin{corollary}\label{Mthmcor1}
Let $k_1, \ldots, k_n$ be natural numbers whose sum is $m$. Then for any complex Banach space $X$
\begin{equation}\label{Mthmcor-1}
\mathbb{C}(k_1, \ldots, k_n; X)\leq\frac{m^m}{k_{1}^{k_1}\cdots k_{n}^{k_n}}\,\frac{k_{1}!\cdots k_{n}!}{m!}
\end{equation}
and the constant is best possible.
\end{corollary}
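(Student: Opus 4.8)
The plan is to reduce the case of an arbitrary complex Banach space to the already-settled $L^{1}$ estimate by exploiting the universal quotient representation of Banach spaces recalled just above the statement. Three ingredients suffice: the Banach--Mazur type fact that every complex Banach space $X$ is isometric to a quotient $\ell^{1}(\Gamma)/M$ for a suitable index set $\Gamma$ and a closed subspace $M$; the invariance of $\mathbb{C}(k_{1},\ldots,k_{n};\cdot)$ under isometries together with its monotonicity under passage to quotients, both recorded at the start of Section 2; and Theorem \ref{Mthm} specialised to $p=1$.

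First I would note that $\ell^{1}(\Gamma)$ is precisely $L^{1}(\mu)$ for $\mu$ the counting measure on $\Gamma$, so Theorem \ref{Mthm} applies directly to it. For every $m\ge 2$ one has $m'=m/(m-1)>1$, so the value $p=1$ lies in the first regime $1\le p\le m'$ of \eqref{Mthm-1}; substituting $p=1$ into the first branch gives
\[
\mathbb{C}(k_{1},\ldots,k_{n};\ell^{1}(\Gamma))\le \frac{m^{m}}{k_{1}^{k_{1}}\cdots k_{n}^{k_{n}}}\,\frac{k_{1}!\cdots k_{n}!}{m!}.
\]
I would then chain the two structural facts: writing $X$ isometrically as $\ell^{1}(\Gamma)/M$, isometry invariance gives $\mathbb{C}(k_{1},\ldots,k_{n};X)=\mathbb{C}(k_{1},\ldots,k_{n};\ell^{1}(\Gamma)/M)$, while the quotient bound $\mathbb{C}(k_{1},\ldots,k_{n};Y/M)\le\mathbb{C}(k_{1},\ldots,k_{n};Y)$ applied with $Y=\ell^{1}(\Gamma)$ yields $\mathbb{C}(k_{1},\ldots,k_{n};\ell^{1}(\Gamma)/M)\le\mathbb{C}(k_{1},\ldots,k_{n};\ell^{1}(\Gamma))$. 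Combining the three displays produces exactly \eqref{Mthmcor-1}.

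For the claim that the constant is best possible, no new construction is needed: optimality already holds at the level of $L^{1}$. By the final assertion of Theorem \ref{Mthm}, the estimate \eqref{Mthm-1} is attained (up to $\varepsilon$) for $p=1$ by the symmetric form produced in Example \ref{Mexample}, which lives on a finite-dimensional $\ell_{n}^{1}$. Since $\ell_{n}^{1}$ is itself a complex Banach space realising the bound, no universal constant strictly smaller than $\tfrac{m^{m}}{k_{1}^{k_{1}}\cdots k_{n}^{k_{n}}}\tfrac{k_{1}!\cdots k_{n}!}{m!}$ can serve for all $X$, and \eqref{Mthmcor-1} is therefore sharp.

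The only point that genuinely requires care is the passage from the classical separable Banach--Mazur theorem to its non-separable analogue with $\ell^{1}(\Gamma)$, and the verification that the quotient map can be taken as an \emph{isometric} identification rather than merely an isomorphism, so that the constant transfers without a Banach--Mazur distortion factor. Once this isometric quotient representation is in hand, the transfer of the estimate is entirely formal and the remainder of the argument is routine.
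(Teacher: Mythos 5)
Your argument is correct and follows essentially the same route as the paper: the non-separable Banach--Mazur quotient representation $X\cong\ell^{1}(\Gamma)/M$, the quotient monotonicity of $\mathbb{C}(k_{1},\ldots,k_{n};\cdot)$ from Section 2, Theorem \ref{Mthm} at $p=1$, and Example \ref{Mexample} on $\ell^{1}$ for sharpness. No substantive difference from the paper's proof.
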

This last result also follows from theorem $1$ in \cite{H1}. By the use of the ``generalized Rademacher functions", another proof of the same result can be found in \cite{ALRT}. We also refer to corollary $4$ in \cite{H2} which is an application of corollary $3$ in \cite{H2} or proposition $3.4$ in \cite{ChSar}. In Example \ref{real-2-2} we show that \eqref{Mthmcor-1} does not hold when {\it real} Banach spaces are considered.

The following Example \ref{Mexample} (see also example 1 in \cite{Sar2}) shows that for $1\leq p\leq m'$ the constant in \eqref{Mthm-1} is best possible. Hence, the constant in \eqref{Mthmcor-1} is best possible.

\begin{example}\label{Mexample}
Consider the symmetric $m$-linear form $L$ on the real or complex sequence space $\ell^{p}$ defined by
\[
L(x_1, \ldots, x_m)=\frac{1}{m!}\sum_{\sigma\in S_m} x_{1\sigma(1)}\cdots x_{m\sigma(m)}\,,
\]
where $x_i=(x_{in})_{n=1}^{\infty}$, $i=1, \ldots, m$, and $S_m$ is the set of permutations of the first $m$ natural numbers. Then, $\Lh(u)=u_{1}\cdots u_{m}$, $u=(u_{i})$, is the $m$-homogeneous polynomial associated to $L$ end
\[
L(x_{1}^{k_1}\ldots x_{n}^{k_n})=\frac{1}{m!}\sum_{\sigma\in S_m} x_{1\sigma(1)}\cdots x_{1\sigma(k_1)}\cdots x_{n\sigma(k_{1}+\cdots +k_{n-1}+1)}\cdots x_{n\sigma(k_{1}+\cdots +k_{n})}\,,
\]
where $k_{1}+ \cdots +k_{n}=m$. If $(e_i)$ is the standard unit vector basis of $\ell^{p}$, define
\begin{align*}
y_{1} &= k_{1}^{-1/p}(e_1 +\cdots+ e_{k_1})\\
y_{2} &= k_{2}^{-1/p}(e_{k_{1}+1} +\cdots+ e_{k_{1}+k_{2}})\\
\vdots\\
y_{n} &= k_{n}^{-1/p}(e_{k_{1}+\cdots +k_{n-1}+1} +\cdots+ e_{k_{1}+\cdots +k_{n}})\,.
\end{align*}
It is easy to see that $y_{1},\ldots, y_{n}$ are unit vectors in $\ell^{p}$ and
\[
L(y_{1}^{k_1}\ldots y_{n}^{k_n})=\frac{1}{k_{1}^{k_{1}/p}\cdots k_{n}^{k_{n}/p}}\,\frac{k_{1}!\cdots k_{n}!}{m!}\,.
\]
On the other hand
\[
|\Lh(u)|=\{|u_{1}|^{p}\cdots|u_{m}|^{p}\}^{1/p}\leq\left\{\frac{|u_{1}|^{p}+\cdots +|u_{m}|^{p}}{m}\right\}^{m/p}
\]
by the arithmetic-geometric mean inequality and so $\|\Lh\|\leq 1/m^{m/p}$. Thus
\[
|L(y_{1}^{k_1}\ldots y_{n}^{k_n})|\geq\left(\frac{m^{m}}{k_{1}^{k_1}\cdots k_{n}^{k_n}}\right)^{1/p} \,\frac{k_{1}!\cdots k_{n}!}{m!}\|\Lh\|\,.
\]
Hence, for  $1\leq p\leq m'$
\[
|L(y_{1}^{k_1}\ldots y_{n}^{k_n})|=\left(\frac{m^{m}}{k_{1}^{k_1}\cdots k_{n}^{k_n}}\right)^{1/p} \,\frac{k_{1}!\cdots k_{n}!}{m!}\|\Lh\|\,.
\]
\end{example}

\vspace{0.2 cm}

Observe that in Example \ref{Mexample} the symmetric $m$-linear form $L$ can be defined on the real or complex $N$-dimensional space $\ell_{N}^{p}$, $m\leq N$. In particular, the latter means that for $X$ being a complex Banach space, the estimate
\[
\mathbb{C}(k_1, \ldots, k_n; X)\leq\mathbb{C}(k_1, \ldots, k_n; \ell^1)=\frac{m^m}{k_{1}^{k_1}\cdots k_{n}^{k_n}}\,\frac{k_{1}!\cdots k_{n}!}{m!}
\]
holds true.

Next, for any complex Banach space $X$ and for $n$ fixed we find the asymptotic growth of $\mathbb{C}(k_1, \ldots, k_n; X)$. For this we need Stirling's formula $m!\sim \sqrt{2\pi m}(m/e)^{m}$, where the sign $\sim$ means that the two quantities are asymptotic, that is their ratio tends to $1$ as $m$ tends to infinity. In particular, we shall make use of the following approximation for $k_{j}!$
\begin{equation}\label{Stirlingform}
\sqrt{2\pi}k_{j}^{k_{j}+1/2}e^{-k_{j}}\leq k_{j}!\leq ek_{j}^{k_{j}+1/2}e^{-k_{j}}\,,\qquad j=1, \ldots, n\,.
\end{equation}

\begin{proposition}\label{Cpolarconst}
Let $k_1, \ldots, k_n$ be natural numbers whose sum is $m$. For any complex Banach space $X$ and for $n$ fixed
\[
\limsup_{m\to \infty} \mathbb{C}(k_1, \ldots, k_n; X)^{1/m}=1\,.
\]
\end{proposition}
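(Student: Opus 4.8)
The plan is to squeeze $\mathbb{C}(k_1,\ldots,k_n;X)^{1/m}$ between two quantities that both tend to $1$. For the lower bound I would note that taking all the base vectors equal gives $L(x^{k_1}\cdots x^{k_n})=\Lh(x)$, so that $\sup_{x_i\in B_X}|L(x_1^{k_1}\cdots x_n^{k_n})|\geq\|\Lh\|$ for every $L\in\mathcal{L}^{s}({}^m X;\mathbb{C})$; hence $\mathbb{C}(k_1,\ldots,k_n;X)\geq 1$ and therefore $\mathbb{C}(k_1,\ldots,k_n;X)^{1/m}\geq 1$ for every $m$. This already yields $\liminf_{m\to\infty}\mathbb{C}(k_1,\ldots,k_n;X)^{1/m}\geq 1$.

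For the matching upper bound I would start from Corollary \ref{Mthmcor1}, valid for any complex Banach space $X$:
\[
\mathbb{C}(k_1,\ldots,k_n;X)\leq \frac{m^m}{k_1^{k_1}\cdots k_n^{k_n}}\,\frac{k_1!\cdots k_n!}{m!}=:C_m\,.
\]
The idea is to estimate $C_m$ using the Stirling bounds \eqref{Stirlingform} for each $k_j!$ together with the lower Stirling estimate $m!\geq\sqrt{2\pi}\,m^{m+1/2}e^{-m}$ in the denominator. Applying $k_j!\leq e\,k_j^{k_j+1/2}e^{-k_j}$ and using $\sum_j k_j=m$, the factors $k_j^{k_j}$ cancel against $k_1^{k_1}\cdots k_n^{k_n}$ and the exponentials collapse to $e^{-m}$, so that $\prod_j k_j!\leq e^{n}e^{-m}\prod_j k_j^{k_j}\prod_j k_j^{1/2}$. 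Combining this with $m^m/m!\leq e^m/\sqrt{2\pi m}$ gives
\[
C_m\leq \frac{e^{n}}{\sqrt{2\pi m}}\prod_{j=1}^{n}k_j^{1/2}\,.
\]

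To finish I would bound the remaining product crudely by $\prod_j k_j^{1/2}\leq m^{n/2}$ (each $k_j\leq m$), so that $C_m\leq \frac{e^{n}}{\sqrt{2\pi}}\,m^{(n-1)/2}$, a quantity that is only \emph{polynomial} in $m$ once $n$ is held fixed. Taking $m$th roots, $\big(e^{n}/\sqrt{2\pi}\big)^{1/m}\to 1$ and $m^{(n-1)/(2m)}=\exp\!\big((n-1)\tfrac{\ln m}{2m}\big)\to 1$, whence $\limsup_{m\to\infty}C_m^{1/m}\leq 1$. Together with the lower bound this forces the asserted equality.

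I do not expect a genuine obstacle here beyond careful bookkeeping: the entire point is that, with $n$ fixed, $C_m$ grows at most polynomially in $m$, so both the constant prefactor $e^{n}/\sqrt{2\pi}$ and the power $m^{(n-1)/2}$ are annihilated by the exponent $1/m$. The only place demanding a little attention is to check that the crude estimate $\prod_j k_j^{1/2}\leq m^{n/2}$ is uniform over all admissible tuples $(k_1,\ldots,k_n)$ with $\sum_j k_j=m$, so that the conclusion is insensitive to how the $k_j$ are permitted to vary with $m$.
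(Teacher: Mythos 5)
Your proof is correct and follows essentially the same route as the paper: both start from Corollary \ref{Mthmcor1} and apply the Stirling bounds \eqref{Stirlingform} to show the upper bound grows only polynomially in $m$ for fixed $n$, the paper using the arithmetic--geometric mean inequality where you use the cruder $\prod_j k_j^{1/2}\leq m^{n/2}$, both yielding the same order $m^{(n-1)/2}$. Your explicit treatment of the lower bound $\mathbb{C}(k_1,\ldots,k_n;X)\geq 1$ is a point the paper leaves implicit, but it is the obvious one.
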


\begin{proof}
Since $\sqrt{2\pi}m^{m+1/2}e^{-m}\leq m!\leq em^{m+ 1/2}e^{-m}$, using the right-hand inequalities in \eqref{Stirlingform}, combined with the arithmetic-geometric mean inequality, we have
\begin{eqnarray*}
\frac{m^m}{k_{1}^{k_1}\cdots k_{n}^{k_n}}\,\frac{k_{1}!\cdots k_{n}!}{m!}
&\leq& \frac{e^n}{\sqrt{2\pi}m^{1/2}}(k_{1}\cdots k_{n})^{1/2}\\
&\leq& \frac{e^n}{\sqrt{2\pi}m^{1/2}}\left(\frac{k_{1}+\cdots +k_{n}}{n}\right)^{n/2}\\
&=& \frac{e^n}{\sqrt{2\pi}n^{n/2}}m^{(n-1)/2}\,.
\end{eqnarray*}
Taking into consideration \eqref{Mthmcor-1}, we conclude that
\[
\limsup_{m\to \infty} \mathbb{C}(k_1, \ldots, k_n; X)^{1/m}=1\,.
\]
\end{proof}

\begin{remark}
Dimant, Galicer and Rodr\'iguez have shown in \cite[theorem 1.1]{DGR} that for any \textit{finite dimensional complex} Banach space $X$, the polarization constant $\mathbb{C}(X)=1$. The key in the proof of their main theorem $1.1$, is proposition $2.1$ in \cite{DGR} which relies on \cite[proposition 4]{Sar4}. In proposition $2.1$ they show that the  polarization constant $\mathbb{C}(\ell_{d}^{1}) =\limsup_{m\to \infty} \mathbb{K}(m, \ell_{d}^{1})^{1/m} =1$. The alternative, and much shorter proof, of this result was communicated to them by one of the referees of \cite{DGR}. Our proof of Proposition \ref{Cpolarconst} is this alternative proof.

Contrary to the complex case, an example in \cite{DGR} shows that a \textit{finite dimensional real} Banach space can have polarization constant bigger that $1$.
\end{remark}

\vspace{0.2 cm}

{\bf An application of Theorem \ref{Mthm}:} Recall the following Markov-type inequality from \cite{ChSar}.

\begin{proposition}$($\cite[proposition 3.4]{ChSar}$)$\label{MthmMarkovineq}
Let $\Lh: L^{p}(\mu)\rightarrow \mathbb{C}$ be a continuous $m$-homogeneous polynomial, $m\geq 2$, on the {\it complex} $L^{p}(\mu)$ space. If $m'$ and $p'$ are the conjugate exponents of $m$ and $p$, respectively, for $k\leq m$ we have the following Markov-type inequality
\[
\|\Dh^{k}\Lh\|\leq C_{k, m}\|\Lh\|\,,
\]
where
\begin{equation}\label{MthmMarkovineq-1}
C_{k, m}= \begin{cases} \left(\frac{m^{m}}{(m-k)^{(m-k)}k^{k}}\right)^{1/p}k! &\text{$1\leq p\leq m'$ ,}\\
\left(\frac{m^{m}}{(m-k)^{(m-k)}k^{k}}\right)^{1/m'}k! &\text{$m'\leq p\leq m$ ,}\\
\left(\frac{m^{m}}{(m-k)^{(m-k)}k^{k}}\right)^{1/p'}k! &\text{$m\leq p\leq\infty$ .}
          \end{cases}
\end{equation}
In the case $1\leq p\leq m'$ the estimate is best possible.
\end{proposition}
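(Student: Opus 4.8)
The plan is to deduce the Markov inequality directly from Theorem \ref{Mthm} applied to the two–index partition $(m-k,k)$. First I would invoke the differentiation identity \eqref{k-homog-differential}: for $x,y\in L^{p}(\mu)$ it reads $\Dh^{k}\Lh(x)y=k!\binom{m}{k}L(x^{m-k}y^{k})$. Taking the supremum first over $\|y\|_{p}\le 1$ and then over $\|x\|_{p}\le 1$ gives
\[
\|\Dh^{k}\Lh\|=k!\binom{m}{k}\sup_{\|x\|_{p},\|y\|_{p}\le 1}|L(x^{m-k}y^{k})|\le k!\binom{m}{k}\,\mathbb{C}(m-k,k;L^{p}(\mu))\,\|\Lh\|,
\]
so that $C_{k,m}=k!\binom{m}{k}\,\mathbb{C}(m-k,k;L^{p}(\mu))$. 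The point of the reduction is the arithmetic cancellation: since $\binom{m}{k}=m!/(k!(m-k)!)$, multiplying by the factor $\tfrac{(m-k)!\,k!}{m!}$ that appears in \eqref{Mthm-1} leaves exactly $k!$, so every bound on $\mathbb{C}(m-k,k;L^{p}(\mu))$ produces a bound on $C_{k,m}$ of the announced shape $\big(\tfrac{m^{m}}{(m-k)^{m-k}k^{k}}\big)^{\alpha}k!$.

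For the ranges $1\le p\le m'$ and $m\le p\le\infty$ the result is then immediate: substituting $n=2$, $k_{1}=m-k$, $k_{2}=k$ into the first and the second inequality of \eqref{Mthm-1} yields the exponents $1/p$ and $1/p'$ respectively, which are precisely the first and third cases of \eqref{MthmMarkovineq-1}.

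The middle range $m'\le p\le m$ is the only delicate point, since Theorem \ref{Mthm} is silent there, and I would handle it by monotonicity. Observe that the two endpoint bounds match: at $p=m'$ the first–case exponent $1/p$ equals $1/m'$, and at $p=m$ the third–case exponent $1/p'$ also equals $1/m'$ because the conjugate of $m$ is $m'$; in both cases $\mathbb{C}(m-k,k;L^{p}(\mu))$ is bounded by the common value $\big(\tfrac{m^{m}}{(m-k)^{m-k}k^{k}}\big)^{1/m'}\tfrac{(m-k)!\,k!}{m!}$. Since $\mathbb{C}(m-k,k;L^{2}(\mu))=1$ (as $L^{2}(\mu)$ is a Hilbert space) and since $p\mapsto\mathbb{C}(m-k,k;L^{p}(\mu))$ is increasing on $2\le p\le\infty$ by the embedding/quotient argument recorded in Section 2 through \eqref{projappl-1} and Lemma \ref{projection}, the map is \emph{valley–shaped} with minimum at $p=2$. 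Consequently on $m'\le p\le m$ its maximum is attained at an endpoint, so it never exceeds the common endpoint value; the right half $2\le p\le m$ follows at once from the established monotonicity ($\mathbb{C}(m-k,k;L^{p}(\mu))\le\mathbb{C}(m-k,k;L^{m}(\mu))$). I expect the main obstacle to be the left half $m'\le p\le 2$: here I must reproduce the decreasing counterpart of that monotonicity on $1\le p\le 2$, transporting the isometric embeddings $L^{p}[0,1]\hookrightarrow L^{m'}[0,1]$ to the polarization constants; duality alone keeps returning to the $[2,\infty]$ statement, so this step genuinely requires the complementation bookkeeping underlying Lemma \ref{projection}, and that is where the real effort lies.

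Finally, optimality for $1\le p\le m'$ is inherited for free: because the reduction in the first paragraph is an \emph{equality} and not merely an inequality, the extremal form of Example \ref{Mexample} with $n=2$, $k_{1}=m-k$, $k_{2}=k$ realises the constant $\mathbb{C}(m-k,k;L^{p}(\mu))$ exactly, hence realises $C_{k,m}$, so no smaller constant is admissible in that range.
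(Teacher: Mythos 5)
Your handling of the first and third cases, and of the optimality claim, is exactly the paper's argument: identity \eqref{k-homog-differential} gives $\Dh^{k}\Lh(x)y=\frac{m!}{(m-k)!}L(x^{m-k}y^{k})$, the factor $\frac{m!}{(m-k)!}\cdot\frac{(m-k)!\,k!}{m!}=k!$ cancels as you say, and since the reduction is an equality of suprema, Example \ref{Mexample} with $n=2$, $k_{1}=m-k$, $k_{2}=k$ does show the constant is attained for $1\leq p\leq m'$. Note that this is all the paper itself proves here: the proposition is quoted from \cite[proposition 3.4]{ChSar}, and the text only verifies that ``the first and the third estimate in \eqref{MthmMarkovineq-1} follow directly from Theorem \ref{Mthm}''; the middle estimate is imported from the reference, not re-derived.

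The middle range $m'\leq p\leq m$ is where your argument has a genuine gap, and you have located it but underestimated it. The right half $2\leq p\leq m$ is fine: Section 2 establishes that $p\mapsto\mathbb{K}(k_1,\ldots,k_n;p)$ is increasing on $[2,\infty]$, so $\mathbb{C}(m-k,k;p)\leq\mathbb{C}(m-k,k;m)$, and the third case of \eqref{Mthm-1} at $p=m$ gives the exponent $1/m'$. But the left half needs $p\mapsto\mathbb{C}(m-k,k;p)$ to be \emph{decreasing} on $[m',2]$, and the mechanism that produced monotonicity on $[2,\infty]$ does not transfer. There, the isometric embedding $L^{p}[0,1]\hookrightarrow L^{r}[0,1]$ for $1\leq r\leq p\leq 2$ was dualized into a quotient map $L^{r'}\to L^{p'}$, and the quotient inequality $\mathbb{K}(k_1,\ldots,k_n;X/M)\leq\mathbb{K}(k_1,\ldots,k_n;X)$ requires no complementation at all. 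To argue directly on $[1,2]$ you would instead need the isometric copy of $L^{p}$ sitting inside $L^{r}$ (spanned by $p$-stable variables) to be $1$-complemented, which it is not; Lemma \ref{projection} with $\|\pi\|>1$ destroys the constant. So ``the complementation bookkeeping underlying Lemma \ref{projection}'' is not a routine step to be filled in later --- it is the missing idea, and your ``valley-shaped'' claim is asserted rather than proved. As it stands, your proposal establishes the first and third cases with the sharpness assertion, and only the sub-range $2\leq p\leq m$ of the middle case.
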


If $x_{1}, x_{2}$ are unit vectors in $L^{p}(\mu)$, from identity \eqref{k-homog-differential} we have
\[
\Dh^{k}\Lh(x_{1})x_{2}=\frac{m!}{(m-k)!}L(x_{1}^{m-k}x_{2}^{k})\,.
\]
So, the first and the third estimate in \eqref{MthmMarkovineq-1} follow directly from Theorem \ref{Mthm}.

In general, if $X$ is a \textit{complex} Banach space and if $\Lh: X\rightarrow \mathbb{C}$ is a continuous $m$-homogeneous polynomial, for $k\leq m$ we have the following Markov-type inequalities:
\begin{equation}\label{complex-Markov}
\|\Dh^{k}\Lh\|\leq\frac{m^{m}}{(m-k)^{(m-k)}k^{k}}k!\|\Lh\|\,,\quad\|D^{k}\Lh\|
\leq\frac{m^{m}}{(m-k)^{(m-k)}}\|\Lh\|
\end{equation}
and the constants are best possible. This is an application of Corollary \ref{Mthmcor1} and Example \ref{Mexample} (we also refer to corollary 1 in \cite{H1}). Recall from inequality \eqref{polarrange} that $\|D^{k}\Lh\|\leq (k^k/k!)\|\Dh^{k}\Lh\|$.


\section{\bf Estimates for polynomial norms on \textit{real} Banach spaces.}

\subsection{\bf Using complexificarion of real Banach spaces.}

Let $k_1, \ldots, k_n$ be natural numbers whose sum is $m$ and let $X$ be a \textit{real} Banach space. If we complexify the real space $X$, using the estimate in \eqref{Mthmcor-1} we get an estimate for $\mathbb{R}(k_1, \ldots, k_n; X)$.

A \textit{complex} vector space $\widetilde{X}$ is a \textit{complexification} of
a \textit{real} vector space $X$ if the following two conditions hold:
\begin{itemize}
\item[(i)] there is a one-to-one real-linear map $j: X\rightarrow \widetilde{X}$ and

\item[(ii)] complex-span$\big(j(X)\big)=\widetilde{X}$.
\end{itemize}
If $X$ is a real vector space, we can make $X\times X$ into a complex vector space by
defining
\begin{align*}
(x,y)+(u,v) &:= (x+u, y+v)\quad\forall x,y,u,v\in X\,, \\
(\alpha+i\beta)(x, y) &:= (\alpha x-\beta y, \beta x+\alpha y)\quad\forall x,y\in X,\quad\forall \alpha, \beta \in\mathbb{R}.
\end{align*}
The map $j: X\rightarrow X\times X$; $x\mapsto (x, 0)$ clearly satisfies conditions (i) and (ii) above, and so this complex vector space is a complexification of $X$. It is convenient to denote it by
\[
\widetilde{X}=X\oplus iX\,.
\]
The norm on $\widetilde{X}$ can be specified by
\[
\|(x, y)\|=\|(|x|^2+|y|^2)^{1/2}\|\,, \qquad \forall x,y\in X.
\]
For more details, consult \cite[p. $326$]{DJT}.

If $X$ is a real-valued $L^{p}(\mu)$ space or $C(K)$ space, this complexification procedure yields the corresponding complex-valued space.

Bochnak and Siciak (see \cite[Theorem 3]{BS}) observed that when $X$ is a real Banach space, each $L\in\mathcal{L}(^{m}X; \mathbb{R})$ has a unique complex extension $\widetilde{L}\in\mathcal{L} (^{m}\widetilde{X}; \mathbb{C})$, defined by the formula
\[
\widetilde{L}(x_{1}^{0}+ ix_{1}^{1},\ldots ,x_{m}^{0}+ix_{m}^{1})=\sum i^{\sum_{j=1}^{m}{\epsilon}_{j}}
L(x_{1}^{{\epsilon}_{1}},\ldots ,x_{m}^{{\epsilon}_{m}}),
\]
where $x_{k}^{0}, x_{k}^{1}$ are vectors in $X$, and the summation is extended over the $2^m$ independent choices of $\epsilon_{k}=0, 1\ (1\leq k\leq m)$. The norm of $\widetilde{L}$ depends on the norm used on $\widetilde{X}$, but continuity is always assured.

In the context of polynomials (see also \cite[p.313]{Taylor}), any $P\in\mathcal{P}(^{m}X; \mathbb{R})$ has a unique complex extension $\widetilde{P}\in\mathcal{P}(^{m}\widetilde{X}; \mathbb{C})$, given by the formula
\begin{equation*}
\widetilde{P}(x+iy)=\sum_{k=0}^{[\frac{m}{2}]}(-1)^{k}{\binom {m}{2k}}L(x^{m-2k}y^{2k})
+i\sum_{k=0}^{[\frac{m-1}{2}]}(-1)^{k}{\binom {m}{2k+1}}L(x^{m-(2k+1)}y^{2k+1})
\end{equation*}
for $x, y$ in $X$, where $P:=\Lh$ for some $L\in\mathcal{L}^{s}(^{m}X; \mathbb{R})$. Here also $\widetilde{P}=\widehat{\widetilde{L}}$.

If $\widetilde{X}$ is the complexification of a real Banach space $X$, each $L\in\mathcal{L}^{s}(^{m}X; \mathbb{R})$ has a unique complex extension $\widetilde{L}\in\mathcal{L}^{s}(^{m}\widetilde{X}; \mathbb{C})$ with $\|L\|\leq \|\widetilde{L}\|$ and $\|P\|\leq \|\widetilde{P}\|$, where $P=\Lh$. We also have \cite[proposition 18]{MST}
\begin{equation}\label{realvscomplex}
\|\widetilde{P}\|\leq 2^{m-1}\|P\|\quad\text{and}\quad\|\widetilde{L}\|\leq 2^{m-1}\|L\|\,.
\end{equation}

\begin{proposition}\label{realthm1}
Let $k_1, \ldots, k_n$ be natural numbers whose sum is $m$. Then for any {\it real} Banach space $X$
\begin{equation}\label{realthm1-1}
\mathbb{R}(k_1, \ldots, k_n; X)\leq 2^{m-1}\mathbb{C}(k_1, \ldots, k_n; \widetilde{X})\leq 2^{m-1}\frac{m^m}{k_{1}^{k_1}\cdots k_{n}^{k_n}}\,\frac{k_{1}!\cdots k_{n}!}{m!}\,,
\end{equation}
where $\widetilde{X}$ is the complexification of the real Banach space $X$.
\end{proposition}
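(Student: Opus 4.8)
The plan is to transfer the problem to the complexification $\widetilde{X}$, where Corollary \ref{Mthmcor1} already supplies the desired bound, and then to pay the price of complexification through the factor $2^{m-1}$ furnished by \eqref{realvscomplex}. The two inequalities in \eqref{realthm1-1} will be handled separately: the first is the genuine content, while the second is an immediate application of Corollary \ref{Mthmcor1} to the complex space $\widetilde{X}$.

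For the first inequality, fix $L\in\mathcal{L}^{s}(^{m}X;\mathbb{R})$ and let $\widetilde{L}\in\mathcal{L}^{s}(^{m}\widetilde{X};\mathbb{C})$ be its unique symmetric complex extension, with associated polynomial $\widehat{\widetilde{L}}=\widetilde{P}$, $P=\Lh$. I would begin from arbitrary unit vectors $x_{1},\ldots,x_{n}\in B_{X}$ and push them into $\widetilde{X}$ through the canonical embedding $j(x)=(x,0)$. Two observations drive the argument. First, from the complexification norm $\|(x,y)\|=\|(|x|^{2}+|y|^{2})^{1/2}\|$ one gets $\|j(x_{i})\|=\|x_{i}\|\leq 1$, so the $j(x_{i})$ are admissible unit vectors in $\widetilde{X}$. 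Second, inspecting the Bochnak--Siciak extension formula, every summand carrying an imaginary component $x_{k}^{1}$ vanishes when all arguments are real, whence $\widetilde{L}(j(x_{1})^{k_1}\cdots j(x_{n})^{k_n})=L(x_{1}^{k_1}\cdots x_{n}^{k_n})$. Combining these with the definition of $\mathbb{C}(k_1,\ldots,k_n;\widetilde{X})$ gives
\[
|L(x_{1}^{k_1}\cdots x_{n}^{k_n})|=|\widetilde{L}(j(x_{1})^{k_1}\cdots j(x_{n})^{k_n})|\leq \mathbb{C}(k_1,\ldots,k_n;\widetilde{X})\,\|\widehat{\widetilde{L}}\|\,.
\]

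To close the first inequality I would invoke \eqref{realvscomplex}: since $\widehat{\widetilde{L}}=\widetilde{P}$ and $\|\widetilde{P}\|\leq 2^{m-1}\|P\|=2^{m-1}\|\Lh\|$, the chain above yields $|L(x_{1}^{k_1}\cdots x_{n}^{k_n})|\leq 2^{m-1}\mathbb{C}(k_1,\ldots,k_n;\widetilde{X})\|\Lh\|$. Taking the supremum over $x_{1},\ldots,x_{n}\in B_{X}$ and then the infimum built into the definition of $\mathbb{R}(k_1,\ldots,k_n;X)$ produces $\mathbb{R}(k_1,\ldots,k_n;X)\leq 2^{m-1}\mathbb{C}(k_1,\ldots,k_n;\widetilde{X})$. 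The second inequality is then immediate, because $\widetilde{X}$ is a complex Banach space and Corollary \ref{Mthmcor1} bounds $\mathbb{C}(k_1,\ldots,k_n;\widetilde{X})$ by $\frac{m^{m}}{k_{1}^{k_1}\cdots k_{n}^{k_n}}\frac{k_{1}!\cdots k_{n}!}{m!}$.

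No step is technically deep; the only points demanding care are the two elementary verifications that $j$ is isometric on $X$ for the chosen complexification norm, and that $\widetilde{L}$ restricts to $L$ on real arguments. Both follow directly from the definitions recalled before the statement, so I expect these to be the sole (minor) obstacles rather than any genuine difficulty. It is worth flagging that the factor $2^{m-1}$ is exactly the loss incurred by \eqref{realvscomplex} and is in general far from optimal, which foreshadows the sharper real estimates to be pursued later by other methods.
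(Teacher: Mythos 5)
Your proposal is correct and follows essentially the same route as the paper: transfer to the complexification via the Bochnak--Siciak extension, use that $\widetilde{L}$ agrees with $L$ on real arguments together with $\|\widetilde{P}\|\leq 2^{m-1}\|P\|$ from \eqref{realvscomplex}, and finish with Corollary \ref{Mthmcor1}. The only cosmetic difference is that the paper runs an $\varepsilon$-argument with near-extremal $L$ and $x_i$, whereas you bound arbitrary $L$ and $x_i$ and then take suprema; the substance is identical.
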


\begin{proof}
Let $\varepsilon>0$. Choose $L\in\mathcal{L}^{s}(^{m} X; \mathbb{R})$ and $x_{i}\in X$, $\|x_{i}\|\leq 1$, $i=1, \ldots, n$, such that
\[
|L(x_{1}^{k_1} \ldots x_{n}^{k_n})|\geq(\mathbb{R}(k_1, \ldots, k_n; X)-\varepsilon) \|\Lh\|\,.
\]
But $L$ has a unique complex extension $\widetilde{L}\in\mathcal{L}^{s}(^{m}\widetilde{X}; \mathbb{C})$ with $\|L\|\leq \|\widetilde{L}\|$ and $\|P\|\leq \|\widetilde{P}\|$, where $P=\Lh$. If we use the first inequality in \eqref{realvscomplex}, it follows that
\[
|\widetilde{L}(x_{1}^{k_1} \ldots x_{n}^{k_n})|\geq\frac{\mathbb{R}(k_1, \ldots, k_n; X)-\varepsilon} {2^{m-1}}\|\widetilde{P}\|\,.
\]
Hence,
\[
\mathbb{R}(k_1, \ldots, k_n; X)-\varepsilon\leq 2^{m-1}\mathbb{C}(k_1, \ldots, k_n; \widetilde{X})
\]
and from Corollary \ref{Mthmcor1} the proof of \eqref{realthm1-1} follows.
\end{proof}

As an application of Proposition \ref{realthm1} in the case where $k_1= \cdots =k_n=1$, we have $n=m$ and from \eqref{realthm1-1} we get
\begin{equation}\label{realthm1-2}
\mathbb{R}(m; X)\leq 2^{m-1} \frac{m^m}{m!}\,.
\end{equation}
However, the last estimate is not the optimal one, since by \eqref{polarrange} we know that $\mathbb{R}(m; X)\leq m^{m}/m!$; i.e., that constant $m^{m}/m!$ is the best possible. A sharper estimate for $\mathbb{R}(m; X)$ is provided later on; see Propositions \ref{realthm2} and \ref{realthm3}. Moreover, Proposition \ref{realthm1}, together with Proposition \ref{Cpolarconst} shall be used to proving the following result regarding the asymptotic growth of the polarization constant $\mathbb{R}(k_1, \ldots, k_n; X)$.

\begin{corollary}{\label{correalthm1}}
Let $k_1, \ldots, k_n$ be natural numbers whose sum is $m$. For any real Banach space $X$ and for $n$ fixed
\[
\limsup_{m\to \infty} \mathbb{R}(k_1, \ldots, k_n; X)^{1/m}\leq 2\,.
\]
\end{corollary}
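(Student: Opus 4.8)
The plan is straightforward: combine the two-sided bound from Proposition~\ref{realthm1} with the complex asymptotic result of Proposition~\ref{Cpolarconst}. From \eqref{realthm1-1} we have, for every $m$,
\[
\mathbb{R}(k_1, \ldots, k_n; X)\leq 2^{m-1}\,\mathbb{C}(k_1, \ldots, k_n; \widetilde{X})\,,
\]
where $\widetilde{X}$ is the complexification of $X$. Taking $m$th roots gives
\[
\mathbb{R}(k_1, \ldots, k_n; X)^{1/m}\leq 2^{(m-1)/m}\,\mathbb{C}(k_1, \ldots, k_n; \widetilde{X})^{1/m}\,.
\]

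First I would observe that $2^{(m-1)/m}\to 2$ as $m\to\infty$, so the first factor contributes exactly the constant $2$ in the limit. The second factor is controlled by Proposition~\ref{Cpolarconst}: since $\widetilde{X}$ is a complex Banach space, that proposition applies to it and yields
\[
\limsup_{m\to\infty}\mathbb{C}(k_1, \ldots, k_n; \widetilde{X})^{1/m}=1\,.
\]
Passing to the $\limsup$ in the displayed inequality and using that the $\limsup$ of a product is bounded by the product of the $\limsup$ (here one factor in fact converges), I obtain
\[
\limsup_{m\to\infty}\mathbb{R}(k_1, \ldots, k_n; X)^{1/m}\leq 2\cdot 1=2\,,
\]
which is the claimed bound.

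The only point requiring a little care is the meaning of the limit: in the statement $n$ is fixed while $m=k_1+\cdots+k_n\to\infty$, so implicitly the individual exponents $k_j$ grow. Proposition~\ref{Cpolarconst} is phrased under precisely this convention (``$n$ fixed''), so its conclusion applies verbatim to $\widetilde{X}$, and no reworking of that argument is needed. I expect no genuine obstacle here; the corollary is a direct packaging of the two previous results, and the $\limsup$ manipulation is routine because the factor $2^{(m-1)/m}$ actually converges rather than merely being bounded.
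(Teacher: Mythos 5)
Your argument is correct and is exactly the route the paper intends: the corollary is stated as a direct consequence of Proposition~\ref{realthm1} combined with Proposition~\ref{Cpolarconst} applied to the complexification $\widetilde{X}$, and your handling of the $m$th roots and the convergent factor $2^{(m-1)/m}$ is the routine computation the authors leave implicit. Nothing further is needed.
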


We also refer to proposition $2.7$ in \cite{DGR} for an analogous result.

Next, we improve the estimate in \eqref{realthm1-1} by using two different techniques.

\subsection{\bf Using the polarization formula.}

First we need a lemma which is an application of the polarization formula \eqref{polarformula}.

\begin{lemma} \label{gpolarformula}
Let $X$ be a Banach space and let $L\in\mathcal{L}^{s}(^{m} X)$. Then for any $x_{1},\ldots,x_{n}\in X$
\[
L(x_{1}^{k_1}\ldots x_{n}^{k_n})=\frac{1}{m!}\int_{0}^{1} r_{1}(t)\cdots r_{m}(t) \Lh\left(\sum_{i=1}^{k_1} r_{i}(t)x_{1}+ \cdots+ \sum_{i=m-k_{n}+1}^{m} r_{i}(t)x_{n}\right)\, dt\,,
\]
where $k_1, \ldots, k_n$ are nonnegative integers whose sum is $m$.
\end{lemma}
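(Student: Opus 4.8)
The plan is to recognize the claim as a direct specialization of the polarization formula \eqref{polarformula}, obtained simply by relabeling repeated arguments and grouping. First I would introduce an auxiliary list of $m$ vectors $y_1, \ldots, y_m \in X$ that repeats each base variable the prescribed number of times. Concretely, setting $K_0 = 0$ and $K_\ell = k_1 + \cdots + k_\ell$ for $1 \le \ell \le n$ (so that $K_n = m$), I define $y_j = x_\ell$ whenever $K_{\ell - 1} < j \le K_\ell$. Thus the first $k_1$ vectors equal $x_1$, the next $k_2$ equal $x_2$, and so on. Since $L$ is symmetric, the value $L(y_1, \ldots, y_m)$ depends only on the multiset of its arguments, and hence by the convention for repeated entries it equals $L(x_1^{k_1} \ldots x_n^{k_n})$.

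Next I would apply the polarization formula \eqref{polarformula} verbatim to the tuple $(y_1, \ldots, y_m)$, which yields
\[
L(y_1, \ldots, y_m) = \frac{1}{m!}\int_0^1 r_1(t) \cdots r_m(t)\, \Lh\left(\sum_{j=1}^{m} r_j(t)\, y_j\right) dt\,.
\]
The integrand here is scalar-valued, so the integral is an ordinary Lebesgue integral and no convergence issue arises beyond what is already granted by \eqref{polarformula}. It then remains only to rewrite the inner argument of $\Lh$ by collecting the indices $j$ according to which base variable $y_j$ carries. The $\ell$-th block, running over $K_{\ell-1} < j \le K_\ell$, contributes $\left(\sum_{j = K_{\ell-1}+1}^{K_\ell} r_j(t)\right) x_\ell$; summing these blocks reproduces exactly the expression $\sum_{i=1}^{k_1} r_i(t) x_1 + \cdots + \sum_{i=m-k_n+1}^{m} r_i(t) x_n$ appearing in the statement. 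Substituting this regrouped sum back into the integral gives the asserted identity.

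I do not expect a genuine obstacle here, as the content is entirely bookkeeping built on top of \eqref{polarformula}. The only point requiring a moment's care is the index accounting in the final regrouping: one must verify that the $\ell$-th block of Rademacher functions $r_{K_{\ell-1}+1}, \ldots, r_{K_\ell}$ is precisely the one multiplying $x_\ell$, which is immediate from the definition of the $y_j$ and the choice of the partial sums $K_\ell$. In particular the boundary cases $\ell = 1$ (indices $1$ through $k_1$) and $\ell = n$ (indices $m - k_n + 1$ through $m$) match the endpoints displayed in the statement, so the proof closes with no further work.
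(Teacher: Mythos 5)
Your proof is correct and is exactly the argument the paper intends: the paper states Lemma \ref{gpolarformula} without proof, describing it only as ``an application of the polarization formula \eqref{polarformula},'' and your substitution $y_j = x_\ell$ on the blocks $K_{\ell-1} < j \le K_\ell$ followed by regrouping is precisely that application. The index bookkeeping checks out, including the degenerate case of a zero exponent, where the corresponding block is empty.
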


\begin{proposition}\label{realthm2}
Let $k_1, \ldots, k_n$ be natural numbers whose sum is $m$, $m\geq 2$. Then for any real Banach space $X$
\begin{equation}\label{realthm2-1}
\mathbb{R}(k_1, \ldots, k_n; X)\leq\frac{n^{m-1}}{m!}\big(k_{1}^{m-1}+\cdots +k_{n}^{m-1}\big)\,.
\end{equation}
\end{proposition}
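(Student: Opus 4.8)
The plan is to feed Lemma \ref{gpolarformula} into the elementary pointwise bound $|\widehat{L}(y)|\le\|\widehat{L}\|\,\|y\|^{m}$ and then exploit the orthonormality of the Rademacher functions. For $j=1,\ldots,n$ I would write $s_{j}(t)=\sum_{i\in I_{j}}r_{i}(t)$, where $I_{j}$ is the $j$-th block of $k_{j}$ consecutive indices appearing in the formula of Lemma \ref{gpolarformula} (so $I_{1}=\{1,\ldots,k_{1}\}$, $I_{2}=\{k_{1}+1,\ldots,k_{1}+k_{2}\}$, and so on), so that the argument of $\widehat{L}$ there is exactly $\sum_{j=1}^{n}s_{j}(t)\,x_{j}$. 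Since each $|r_{i}(t)|=1$ we have $|r_{1}(t)\cdots r_{m}(t)|=1$, and if $x_{1},\ldots,x_{n}$ are unit vectors then by the triangle inequality $\big\|\sum_{j=1}^{n}s_{j}(t)x_{j}\big\|\le\sum_{j=1}^{n}|s_{j}(t)|$. Taking absolute values in Lemma \ref{gpolarformula} therefore gives
\[
|L(x_{1}^{k_1}\cdots x_{n}^{k_n})|\le\frac{\|\widehat{L}\|}{m!}\int_{0}^{1}\Big(\sum_{j=1}^{n}|s_{j}(t)|\Big)^{m}dt.
\]

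Next I would convexify the inner sum. By the power-mean inequality (equivalently Hölder with exponents $m$ and $m/(m-1)$), for nonnegative reals $a_{1},\ldots,a_{n}$ one has $(\sum_{j}a_{j})^{m}\le n^{m-1}\sum_{j}a_{j}^{m}$; applied with $a_{j}=|s_{j}(t)|$ this yields $(\sum_{j}|s_{j}(t)|)^{m}\le n^{m-1}\sum_{j=1}^{n}|s_{j}(t)|^{m}$. Substituting this into the display above and interchanging sum and integral reduces everything to estimating the single moments $\int_{0}^{1}|s_{j}(t)|^{m}\,dt$.

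The crux is the bound $\int_{0}^{1}|s_{j}(t)|^{m}\,dt\le k_{j}^{m-1}$. Here I would use that $s_{j}$ is a sum of $k_{j}$ Rademacher functions, so $|s_{j}(t)|\le k_{j}$ pointwise, while orthonormality of $(r_{i})$ gives $\int_{0}^{1}s_{j}(t)^{2}\,dt=k_{j}$ (the cross terms vanish). Since $m\ge 2$ the factor $|s_{j}|^{m-2}$ is a genuine nonnegative power, so
\[
\int_{0}^{1}|s_{j}(t)|^{m}\,dt=\int_{0}^{1}|s_{j}(t)|^{m-2}\,s_{j}(t)^{2}\,dt\le k_{j}^{m-2}\int_{0}^{1}s_{j}(t)^{2}\,dt=k_{j}^{m-1}.
\]
This is the only substantive step, and it is the only place where the hypothesis $m\ge 2$ and the block structure of the argument are actually used. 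Combining the three estimates gives $|L(x_{1}^{k_1}\cdots x_{n}^{k_n})|\le\frac{n^{m-1}}{m!}\big(k_{1}^{m-1}+\cdots+k_{n}^{m-1}\big)\,\|\widehat{L}\|$; taking the supremum over unit vectors $x_{1},\ldots,x_{n}$ and then the infimum over admissible constants in the definition of $\mathbb{R}(k_{1},\ldots,k_{n};X)$ yields \eqref{realthm2-1}.
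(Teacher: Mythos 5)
Your proof is correct and follows essentially the same route as the paper: Lemma \ref{gpolarformula}, the triangle inequality, the convexity bound $(\sum_j a_j)^m\le n^{m-1}\sum_j a_j^m$, and then a moment estimate $\int_0^1|s_j(t)|^m\,dt\le k_j^{m-1}$ for each block sum of Rademacher functions. The only divergence is in how that last estimate is justified: the paper invokes the Riesz--Thorin interpolation theorem (citing an inequality from Williams--Wells), whereas you prove it directly by writing $\int_0^1|s_j|^m\,dt\le\|s_j\|_\infty^{m-2}\int_0^1 s_j^2\,dt\le k_j^{m-2}\cdot k_j$, using the pointwise bound $|s_j|\le k_j$ and the orthonormality of the Rademacher system. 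This is in effect the $L^2$--$L^\infty$ interpolation carried out by hand; it is self-contained, yields the identical constant $(k_j)^{m-1}$, and uses $m\ge2$ exactly where the paper does, so nothing is lost by avoiding the citation.
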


\begin{proof}
Let $L\in\mathcal{L}^{s}(^{m} X; \mathbb{R})$ and let $x_{1},\ldots,x_{n}$ be unit vectors in $X$. From Lemma \ref{gpolarformula} and H\"{o}lder's inequality we have
\begin{eqnarray*}
|L(x_{1}^{k_1}\ldots x_{n}^{k_n})| &\leq& \frac{1}{m!}\|\Lh\|\int_{0}^{1} \left\{\bigg|\sum_{i=1}^{k_1} r_{i}(t)\bigg|+ \cdots +\bigg|\sum_{i=m-k_{n}+1}^{m} r_{i}(t)\bigg|\right\}^{m}\, dt\\
&\leq& \frac{n^{m-1}}{m!}\|\Lh\|\left\{\int_{0}^{1} \bigg|\sum_{i=1}^{k_1} r_{i}(t)\bigg|^{m}\,dt+ \cdots +\int_{0}^{1} \bigg|\sum_{i=m-k_{n}+1}^{m} r_{i}(t)\bigg|^{m}\, dt\right\}\,.
\end{eqnarray*}
The below inequality is an application of the Riesz-Thorin interpolation theorem, see \cite{BL}(we also refer to inequality ($27$) in \cite[theorem 5]{WW}). For $j<k$, and for $m'$ being the conjugate exponent of $m$, $m\geq 2$, we have:
\[
\int_{0}^{1} \bigg|\sum_{i=j+1}^{k} r_{i}(t)\bigg|^{m}\, dt\leq (k-j)^{m/m'}=(k-j)^{m-1}\,.
\]
Hence,
\begin{eqnarray*}
|L(x_{1}^{k_1}\ldots x_{n}^{k_n})|\leq\frac{n^{m-1}}{m!}\big(k_{1}^{m-1}+\cdots +k_{n}^{m-1}\big) \|\Lh\|
\end{eqnarray*}
and the proof of \eqref{realthm2-1} follows.
\end{proof}

Inequality \eqref{realthm2-1} recovers the optimal estimate $m^m/m!$; indeed, in the special case
$k_1= \cdots =k_n=1$, inequality  \eqref{realthm2-1} gives
\begin{equation}\label{realthm2-2}
\mathbb{R}(m; X)\leq \frac{m^m}{m!}\,.
\end{equation}

Consider now the case where $L\in\mathcal{L}^{s}(^{m} L^{p}(\mu); \mathbb{R})$, $1\leq p\leq\infty$. Let $k_1, \ldots, k_n$ be natural numbers whose sum is $m$, $m\geq 2$. If $x_1, \ldots, x_n$ are norm-one vectors in $L^{p}(\mu)$ with \textit{disjoint supports}, using Lemma \ref{gpolarformula} and Clarkson inequalities the following estimates were derived in theorem $3.6$ in \cite{PSar}
\[
|L(x_{1}^{k_1}\ldots x_{n}^{k_n})|\leq\mathbb{R}(k_{1}, \ldots,k_{n}; L^{p}(\mu))\|\Lh\|\,,
\]
where
\[
\mathbb{R}(k_{1}, \ldots,k_{n}; L^{p}(\mu))=\begin{cases} \frac{1}{m!}\left(k_{1}^{p-1}+\cdots+k_{n}^{p-1}\right)^{m/p} &\text{if $p\geq m$}\\
\\
\frac{n^{(m-p)/p}}{m!}\left(k_{1}^{m-1}+\cdots+k_{n}^{m-1}\right) &\text{if $1\leq p\leq m$\,.}
                           \end{cases}
\]
It is easy to check that these two last estimates are smaller than the estimate given in \eqref{realthm2-1} (in the first estimate just use H\"{o}lder's inequality).

For some more estimates on $L^{p}(\mu)$ spaces see theorems $3.2$ and $3.12$ in \cite{PSar}.

\subsection{\bf Using the Hilbert space case.}

Let $F\in\mathcal{L}^{s}(^{m} H)$, where $H$ is a Hilbert space. Recall from the introduction that $\mathbb{K}(k_1, \ldots, k_n; H)=1$, where $k_1, \ldots, k_n$ are nonnegative integers whose sum is $m$. So, if $y_1, \ldots, y_n$ are unit vectors in $H$, then $|F(y_{1}^{k_1}\ldots y_{n}^{k_n})|\leq \|\Fh\|$.

\begin{proposition}\label{realthm3}
Let $k_1, \ldots, k_n$ be nonnegative integers whose sum is $m$. Then for any {\it real} Banach space $X$
\begin{equation}\label{realthm3-1}
\mathbb{R}(k_1, \ldots, k_n; X)\leq\sqrt{\frac{m^m}{k_{1}^{k_1}\cdots k_{n}^{k_n}}}\,.
\end{equation}
\end{proposition}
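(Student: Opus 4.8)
The plan is to transplant the Hilbert space identity $\mathbb{R}(k_1,\ldots,k_n;\ell_n^{2})=1$, recalled just above the statement, onto the $n$-dimensional configuration spanned by the competing unit vectors, after a weighting chosen so that this transplant does not increase the polynomial norm. Fix $L\in\mathcal{L}^{s}({}^{m}X;\mathbb{R})$ and unit vectors $x_1,\ldots,x_n\in X$. We may assume each $k_j\ge 1$, since any variable with $k_j=0$ does not occur in $L(x_{1}^{k_1}\ldots x_{n}^{k_n})$ and contributes the factor $k_j^{k_j}=1$ to the right-hand side of \eqref{realthm3-1}. The crucial choice is the weight $s_j=(k_j/m)^{1/2}$, picked precisely so that $\sum_{j=1}^{n}s_j^{2}=\frac{1}{m}\sum_{j=1}^{n}k_j=1$.

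Next I would introduce the linear map $T\colon \ell_n^{2}\to X$ determined by $Te_j=s_jx_j$. By the triangle inequality and Cauchy--Schwarz, for every $u$ with $\|u\|_2\le 1$,
\[
\Big\|\sum_{j=1}^{n} u_j s_j x_j\Big\|_X\le\sum_{j=1}^{n}|u_j|s_j\le \|u\|_2\Big(\sum_{j=1}^{n} s_j^{2}\Big)^{1/2}=\|u\|_2\le 1,
\]
so $\|T\|\le 1$. Pulling $L$ back through $T$ yields $F\in\mathcal{L}^{s}({}^{m}\ell_n^{2};\mathbb{R})$ given by $F(u_1,\ldots,u_m):=L(Tu_1,\ldots,Tu_m)$, whose associated polynomial is $\Fh(u)=\Lh(Tu)$; hence $\|\Fh\|\le\|T\|^{m}\|\Lh\|\le\|\Lh\|$.

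I would then evaluate $F$ on the standard basis vectors, which are unit vectors of $\ell_n^{2}$. Since $Te_j=s_jx_j$,
\[
F(e_{1}^{k_1}\ldots e_{n}^{k_n})=s_{1}^{k_1}\cdots s_{n}^{k_n}\,L(x_{1}^{k_1}\ldots x_{n}^{k_n}),
\]
while the Hilbert space equality $\mathbb{R}(k_1,\ldots,k_n;\ell_n^{2})=1$ gives $|F(e_{1}^{k_1}\ldots e_{n}^{k_n})|\le\|\Fh\|$. Combining these with $\|\Fh\|\le\|\Lh\|$ produces
\[
s_{1}^{k_1}\cdots s_{n}^{k_n}\,|L(x_{1}^{k_1}\ldots x_{n}^{k_n})|\le\|\Lh\|.
\]
Finally, the weight was chosen so that $\prod_{j=1}^{n}s_j^{k_j}=\big(\prod_{j=1}^{n}(k_j/m)^{k_j}\big)^{1/2}=\big(k_{1}^{k_1}\cdots k_{n}^{k_n}/m^{m}\big)^{1/2}$, so dividing yields $|L(x_{1}^{k_1}\ldots x_{n}^{k_n})|\le\sqrt{m^{m}/(k_{1}^{k_1}\cdots k_{n}^{k_n})}\,\|\Lh\|$; taking the supremum over the $x_j$ and the infimum defining $\mathbb{R}(k_1,\ldots,k_n;X)$ proves \eqref{realthm3-1}.

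The only genuine decision is the weighting, and it is essentially forced: one asks for the $s_j$ that simultaneously keep $\|T\|\le 1$ (equivalently $\sum s_j^{2}=1$) and maximise the gain factor $\prod s_j^{k_j}$. A Lagrange-multiplier computation for maximising $\prod s_j^{k_j}$ subject to $\sum s_j^{2}=1$ returns exactly $s_j^{2}=k_j/m$, which is why this choice is optimal among weightings of this form. Beyond recording this elementary optimisation there is no real obstacle; the argument is simply the observation that a norm-one operator into $X$ pulls the trivial Hilbert space polarization estimate back to the sharp weighted bound.
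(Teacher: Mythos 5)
Your argument is correct and is essentially the paper's own proof: the same weights $r_j=(k_j/m)^{1/2}$, the same reduction to an $m$-homogeneous polynomial on $\ell_n^{2}$ with $\|\Fh\|\le\|\Lh\|$ via the triangle and Cauchy--Schwarz inequalities, and the same appeal to $\mathbb{K}(k_1,\ldots,k_n;\ell_n^{2})=1$. The only cosmetic difference is that you read off $F(e_{1}^{k_1}\ldots e_{n}^{k_n})$ directly from the pulled-back multilinear form $L(Tu_1,\ldots,Tu_m)$, whereas the paper recovers it from $\Fh$ by the multinomial formula and partial differentiation.
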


\begin{proof}
Let $L\in\mathcal{L}^{s}(^{m} X; \mathbb{R})$. If $x_{1},\ldots, x_{n}$ are unit vectors in $X$, put $r_j=(k_j/m)^{1/2}$ for $j=1, \ldots, n$. For any $t=(t_{1}, \ldots ,t_{n})$ in the $n$-dimensional Euclidean space $\ell_{n}^{2}$, define
\[
\Fh(t):= \Lh(r_{1}x_{1}t_{1}+\cdots +r_{n}x_{n}t_{n})\,.
\]
Then $\Fh$ is an $m$-homogeneous polynomial on $\ell_{n}^{2}$ with $\|\Fh\|\leq \|\Lh\|$. To see this, observe that for any unit vector $t=(t_{1}, \ldots ,t_{n})$ in $\ell_{n}^{2}$
\[
|\Fh(t)|\leq \|\Lh\|\|r_{1}x_{1}t_{1}+\cdots +r_{n}x_{n}t_{n}\|^{m}\leq\|\Lh\|(r_{1}^{2}+\cdots +r_{n}^{2})^{m/2}(t_{1}^{2}+\cdots +t_{n}^{2})^{m/2}=\|\Lh\|\,.
\]
On the other hand, for any $t=t_{1}e_{1}+\cdots + t_{n}e_{n}$, where $(e_i)_{i=1}^n$ is the standard unit vector basis on the Euclidean space $\mathbb{R}^{n}$, from the multinomial formula
\begin{eqnarray*}
\Fh(t) &=& \sum_{j_1+ \cdots +j_n=m} \frac{m!}{j_{1}! \cdots j_{n}!}F(e_{1}^{j_1}\ldots e_{n}^{j_n}) t_{1}^{j_1}\cdots t_{n}^{j_n}\\
&=& \sum_{j_1+ \cdots +j_n=m} \frac{m!}{j_{1}! \cdots j_{n}!}r_{1}^{j_1}\cdots r_{n}^{j_n} L(x_{1}^{j_1}\ldots x_{n}^{j_n}) t_{1}^{j_1}\cdots t_{n}^{j_n}\,.
\end{eqnarray*}
Then by taking partial derivatives
\[
\frac{\partial^{k_2 +\cdots+ k_n}}{\partial{t_{2}^{k_2}}\ldots \partial{t_{n}^{k_n}}} \Fh(e_1)
=\frac{m!}{k_{1}!}F(e_{1}^{k_1}\ldots e_{n}^{k_n})=\frac{m!}{k_{1}!}r_{1}^{k_1}\cdots r_{n}^{k_n} L(x_{1}^{k_1}\ldots x_{n}^{k_n})\,.
\]
So,
\begin{eqnarray*}
|L(x_{1}^{k_1}\ldots x_{n}^{k_n})| &=& \frac{1}{r_{1}^{k_1}\cdots r_{n}^{k_n}}|F(e_{1}^{k_1}\ldots e_{n}^{k_n})|\\
&\leq& \sqrt{\frac{m^m}{k_{1}^{k_1}\cdots k_{n}^{k_n}}}\,\|\Fh\|
\leq\sqrt{\frac{m^m}{k_{1}^{k_1}\cdots k_{n}^{k_n}}}\,\|\Lh\|
\end{eqnarray*}
and the proof of \eqref{realthm3-1} follows.
\end{proof}

By the use of a different technique, Proposition \ref{realthm3} was also proved by Harris in \cite[corollary 7]{H2}. The case $n=2$ of the above proposition is in fact part (a) of the corollary in \cite{Sar5}. However, still the estimate \eqref{realthm3-1} is far from being optimal; indeed, consider the case where $k_{1}=\cdots =k_{n}=1$, then the right-hand side of \eqref{realthm3-1} becomes $m^{m/2}$, and clearly we have $m^{m}/m!<m^{m/2}$ for $m\geq 3$.

The following is an immediate consequence of Proposition \ref{realthm2} and Proposition \ref{realthm3}.

\begin{corollary}\label{corsummary}
Let $k_1, \ldots, k_n$ be natural numbers whose sum is $m$. Then for any {\it real} Banach space $X$
\begin{equation}\label{corsummary-1}
\mathbb{R}(k_1, \ldots, k_n; X)\leq \min\left\{\frac{n^{m-1}}{m!}\big(k_{1}^{m-1}+\cdots +k_{n}^{m-1}\big)\,,\,\, \sqrt{\frac{m^m}{k_{1}^{k_1}\cdots k_{n}^{k_n}}}\right\}\,.
\end{equation}
\end{corollary}

In particular, from \eqref{corsummary-1} we have $\mathbb{R}(2, 2; X)\leq 4$. But, $\mathbb{R}(2, 2; X)\leq 3$ and $3$ is best possible. To see this, consider $L\in\mathcal{L}^{s}(^{4} X; \mathbb{R})$. For $x, y\in X$
\[
\Lh(x)+ \Lh(y)+ 6L(x^{2}y^{2})=\int_{0}^{1} \Lh(r_{1}(t)x+ r_{2}(t)y)\, dt
\]
where $r_{1}$, $r_{2}$ are the first two Rademacher functions. Thus
\[
|L(x^{2}y^{2})|\leq 3\|\Lh\|
\]
for all unit vectors $x$ and $y$. The following example, see example I.2.16 in \cite{Sar1} or example $1$ in \cite{Sar5}, shows that $3$ is the best constant and therefore
\[
\mathbb{R}(2, 2; X)=3>\frac{8}{3}\geq\mathbb{C}(2, 2; X)\,.
\]

\begin{example}\label{real-2-2}
Consider the {\it real} space $\ell_{4}^{\infty}$. For $x=(x_{1}, x_{2}, x_{3}, x_{4})\in \ell_{4}^{\infty}$, choose the $4$-homogeneous polynomial
\[
\Lh(x)=(x_{1}^{2}- x_{2}^{2})^{2}-  (x_{3}^{2}- x_{4}^{2})^{2}\,.
\]
It is easy to verify that $\|\Lh\|=1$. For $x=(x_{1}, x_{2}, x_{3}, x_{4})$ and $y=(y_{1}, y_{2}, y_{3}, y_{4})$ in $\ell_{4}^{\infty}$ we can easily check that
\begin{eqnarray*}
L(x^{2}y^{2}) &=& x_{1}^{2}y_{1}^{2}+x_{2}^{2}y_{2}^{2}-x_{3}^{2}y_{3}^{2}-x_{4}^{2}y_{4}^{2}
-\frac{1}{3}\big(x_{1}^{2}y_{2}^{2}+4x_{1}x_{2}y_{1}y_{2}+x_{2}^{2}y_{1}^{2}\big)\\
& & +\frac{1}{3}\big(x_{3}^{2}y_{4}^{2}+4x_{3}x_{4}y_{3}y_{4}+x_{4}^{2}y_{3}^{2}\big)\,.
\end{eqnarray*}
Taking $x=(1, 1, 0, 1)$ and $y=(1, -1, 1, 0)$ we get
\[
|L(x^{2}y^{2})|= 3\|\Lh\|\,.
\]
\end{example}

The problem of determining the best constant in \eqref{corsummary-1} is open. Harris in his commentary to problems $73$ and $74$ of Mazur and Orlicz in the Scottish Book \cite{Scottish} discusses in detail this problem.

Suppose $M_{m, k}$ and $K_{m, k}$, $k\leq m$, are the smallest numbers with the property
\[
\|\Dh^{k}\Lh\|\leq M_{m, k}\|\Lh\|\quad\text{and}\quad\|D^{k}\Lh\|\leq K_{m, k}\|\Lh\|\,,
\]
respectively, for every continuous $m$-homogeneous polynomial $\Lh$ on any \textit{real} Banach space $X$. Then, from Example \ref{Mexample} and the estimate in \eqref{realthm3-1} it follows that
\begin{equation}\label{real-Markov-1}
\frac{m^{m}k!}{(m-k)^{(m-k)}k^{k}}\leq M_{m, k}\leq {m\choose k}\frac{k!m^{m/2}}{(m-k)^{(m-k)/2}k^{k/2}}
\end{equation}
and
\begin{equation}\label{real-Markov-2}
\frac{m^{m}}{(m-k)^{(m-k)}}\leq K_{m, k}\leq {m\choose k}\frac{m^{m/2}k^{k/2}}{(m-k)^{(m-k)/2}}\,.
\end{equation}

The last two inequalities were also proved in \cite{Sar5}. The upper bounds for $M_{m, k}$ and
$K_{m, k}$ are not best possible. From inequality \eqref{real-Markov-1} the upper bound for $M_{4, 2}$ is $48$. But, from Example \ref{real-2-2} we have $M_{4, 2}=36$ and this is the best constant.

\subsection{\bf Markov-type inequalities for polynomials on \textit{real} Banach spaces.}

This subsection is reproduced from \cite{ChSar}. V. A. Markov (brother of A. A. Markov) considered the problem of determining exact bounds for the $k$th derivative of an algebraic polynomial. For $1\leq k\leq m$, if $p\in\mathcal{P}_m(\mathbb{R})$
and $\|p\|_{[-1,1]}\leq 1$, V. A. Markov \cite{M} has shown that
\[
\|p^{(k)}\|_{[-1,1]}\leq T_{m}^{(k)}(1)=\frac{m^2(m^2-1^2)\cdots (m^2-(k-1)^2)}{1\cdot3\cdots (2k-1)}\,.
\]
Recall thst the $m$th Chebyshev polynomial $T_{m}(t)$ is the polynomial agreeing $\cos(m\arccos{t})$ in the range $-1<t<1$.

Let $K\subset\mathbb{R}^{n}$ be a convex body, i.e. a convex compact set with non-empty interior. If $u$ is a unit vector in $\mathbb{R}^{n}$ then there are precisely two support hyperplanes to $K$ having $u$ for a normal vector. The distance $w(u)$ between these parallel support hyperplanes is the width of $K$ in the direction of $u$. The \textit{minimal width} of $K$ is $w(K):= \min_{\|u\|_{2}=1} w(u)$

Consider now the case where $K\subset\mathbb{R}^{n}$ is a centrally symmetric convex body with center at the origin, in other words $K$ is invariant under $x\mapsto -x$. We call $K$ a ball. A ball $K$ is the unit ball of a unique Banach norm $\|\cdot\|_K$ defined by
\[
\|x\|_{K}=\inf\{t>0: \ x/t\in K\},\quad x\in\mathbb{R}^{n}\,.
\]
If $P\in\mathcal{P}_{m}(\mathbb{R}^n)$, $x\in\interior{K}$ and $y\in S_{\mathbb{R}^n}$, the next sharp Bernstein and Markov-type inequalities follow from the work of Sarantopoulos\cite{Sar5}:
\begin{align}
|DP(x)y| &\leq \frac{2m}{w(K)\sqrt{1-\|x\|_K^2}}\|P\|_{K}\,, \label{sar5-1}\\
\|\nabla P\|_K &\leq\frac{2m^2}{w(K)}\|P\|_{K}\,. \label{sar5-2}
\end{align}
In fact, if $X$ is a \textit{real} Banach space and $P\in\mathcal{P}_{m}(X)$, $\|P\|\leq 1$, for the first Fr\'echet derivative of $P$ it was proved in \cite{Sar5} that
\begin{equation}\label{firstgen}
\|DP(x)\|\leq\min\left\{m\frac{\sqrt{1-P(x)^2}}{\sqrt{1-\|x\|^2}},\, m^{2}\right\},\quad\text{for every $\|x\|<1$}\,.
\end{equation}
Using methods of several complex variables, inequalities (\ref{sar5-1}) and (\ref{sar5-2})  were proved independently by Baran \cite{BARAN}.

Finally, the proof of Markov's inequality for any derivative of a polynomial on a \textit{real} Banach space $X$ was given my Skalyga in \cite{Sk1}, see also \cite{Sk2}. In 2010 Harris \cite{H4} gave another proof which depends on a Lagrange interpolation formula for the Chebyshev nodes and a Christoffel-Darboux identity for the corresponding bivariate Lagrange polynomials \cite{H3}.

\begin{theorem}$(${\bf V. A. Markov's theorem}$)$\cite{Sk1, Sk2, H4}\label{Sk1-1}
Let $X$ be a \textit{real} Banach space and let $P\in\mathcal{P}_{m}(X)$ with $\|P\|\leq 1$. Then for any $1\leq k\leq m$,
\begin{equation}\label{Mineq}
\|\widehat{D}^{k}P\|\leq T_{m}^{(k)}(1)=\frac{m^2(m^2-1^2)\cdots (m^2-(k-1)^2)}{1\cdot3\cdots (2k-1)}\,.
\end{equation}
\end{theorem}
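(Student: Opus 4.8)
The plan is to reduce the inequality to the classical one--variable theorem of V.~A. Markov \cite{M} and then to transport it to $X$ by an interpolation argument. Recall that on $[-1,1]$ the extremal problem is solved by the Chebyshev polynomial: if $q$ is a real polynomial of degree $\le m$ with $\|q\|_{[-1,1]}\le 1$, then $\|q^{(k)}\|_{[-1,1]}\le T_{m}^{(k)}(1)$, with equality for $q=\pm T_{m}$. To connect this with the Fr\'echet derivative, fix $x$ with $\|x\|\le 1$ and a unit vector $y$, and consider the one--variable polynomial $g(t)=P(x+ty)$, which has degree $\le m$ and satisfies $\widehat{D}^{k}P(x)(y)=g^{(k)}(0)$. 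Since $|g(t)|\le\|P\|\le 1$ whenever $x+ty$ lies in $B_{X}$, the whole difficulty is concentrated in controlling $g^{(k)}(0)$ from the values of $g$ on the set $I=\{t:\ x+ty\in B_{X}\}$.

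At the outset I would stress \emph{why the naive arguments fail to reach the sharp constant}. Restricting to the single line $\{x+ty\}$ and rescaling $I$ to $[-1,1]$ introduces a factor $(2/|I|)^{k}$, and $|I|$ can be arbitrarily small: for instance in real $\ell_{\infty}^{2}$ with $x=(1,1)$ and $y=(1,-1)$ one has $I=\{0\}$, even though the bound $T_{m}^{(k)}(1)$ is attained there (take $P(u)=T_{m}(u_{1})$). Likewise, iterating the first--order estimate \eqref{firstgen} $k$ times only produces a constant of the order $m^{2}(m-1)^{2}\cdots$, which is strictly larger than $T_{m}^{(k)}(1)$. Hence a genuinely multivariate device is needed: one must exploit the values of $P$ on all of $B_{X}$, not merely along a single chord.

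This is supplied by Harris' method \cite{H3,H4} (the theorem itself being due to Skalyga \cite{Sk1,Sk2}). I would represent the functional $q\mapsto q^{(k)}(s)$ on polynomials of degree $\le m$ through Lagrange interpolation at the Chebyshev nodes $\xi_{j}=\cos(j\pi/m)$, writing $q^{(k)}(s)=\sum_{j=0}^{m}c_{j}(s)\,q(\xi_{j})$ with $c_{j}(s)=\ell_{j}^{(k)}(s)$; at $s=1$ the signs of the $c_{j}$ alternate with $T_{m}(\xi_{j})=(-1)^{j}$, and the functional norm $\sum_{j}|c_{j}(1)|$ equals $T_{m}^{(k)}(1)$, which is exactly the content of the one--variable theorem, while for interior $s$ the norm is no larger. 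To lift this to $X$, I would pass to the two--dimensional section of $P$ through $x$ in the directions $y$ and a suitable auxiliary vector, interpolate the resulting two--variable polynomial on a Chebyshev grid whose nodes $z_{j}$ all lie in $B_{X}$, and use the bivariate Christoffel--Darboux identity of \cite{H3} to reorganise $\widehat{D}^{k}P(x)(y)$ as a combination $\sum_{j}c_{j}\,P(z_{j})$ carrying the \emph{same} total weight. Then $|\widehat{D}^{k}P(x)(y)|\le\big(\sum_{j}|c_{j}|\big)\|P\|\le T_{m}^{(k)}(1)$, and taking the supremum over $\|x\|\le 1$ and unit $y$ gives \eqref{Mineq}.

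The main obstacle is precisely this lift: arranging the interpolation nodes so that they fall inside $B_{X}$ while the associated weights still sum to exactly $T_{m}^{(k)}(1)$. The Christoffel--Darboux identity does the essential bookkeeping here, guaranteeing that the multivariate quadrature inherits the norm of the one--variable functional rather than something larger; establishing that identity together with the node placement is the technical heart of \cite{H3,H4}. As a consistency check, the case $k=1$ reduces to the Bernstein--Markov estimate already recorded in \eqref{sar5-2} and \eqref{firstgen} (cf.\ \cite{Sar5}), since $T_{m}'(1)=m^{2}$.
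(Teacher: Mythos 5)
The paper does not actually prove this theorem: it is quoted from the literature, with the proof attributed to Skalyga \cite{Sk1,Sk2} and an alternative interpolation-based proof to Harris \cite{H4} --- precisely the sources your sketch invokes. Relative to the paper's own treatment there is therefore nothing to compare; judged on its own, your proposal is an accurate \emph{description} of Harris's strategy rather than a proof. Your preliminary observations are correct and worth keeping: the example in real $\ell_{\infty}^{2}$ with $x=(1,1)$, $y=(1,-1)$ (where $\{t:\ x+ty\in B_X\}=\{0\}$ yet the bound is attained by $P(u)=T_m(u_1)$) genuinely shows that a single-chord reduction cannot work, and iterating \eqref{firstgen} does produce the inferior constant $m^2(m-1)^2\cdots(m-k+1)^2$, which exceeds $T_m^{(k)}(1)$ by roughly the factor $1\cdot 3\cdots(2k-1)$.

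The two places where your argument is only asserted, not established, are exactly the technical heart of \cite{H3,H4}. First, the claim that $\sum_{j}|\ell_{j}^{(k)}(s)|\le T_{m}^{(k)}(1)$ for all $s\in[-1,1]$ is strictly stronger than V.~A.~Markov's one-variable theorem: the classical theorem bounds the norm of the functional $q\mapsto q^{(k)}(s)$ on $\{q:\|q\|_{[-1,1]}\le 1\}$, whereas the sum of the absolute node weights is its norm on the larger set $\{q:|q(\xi_j)|\le 1\ \forall j\}$; the two coincide at $s=\pm 1$ by the sign-alternation with $T_m(\xi_j)=(-1)^j$, but at interior $s$ the stronger inequality requires its own proof. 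Second, the lift to $X$ --- choosing a two-dimensional section and a Chebyshev-type grid of nodes lying in $B_X$ so that the Christoffel--Darboux bookkeeping yields weights whose absolute sum is still at most $T_m^{(k)}(1)$ --- is the main content of Harris's papers and is not carried out here. So the proposal is a correct roadmap with the citations placed where the paper places them, but it should not be mistaken for a self-contained argument.
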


So, for any continuous polynomial $P$ of degree $m$ on a real Banach space $X$, $T_{m}^{(k)}(1)$ is the best possible constant for $\|\widehat{D}^{k}P(x)\|$, $\|x\|\leq 1$. As mentioned earlier the upper bound for $\|\widehat{D}^{k}P(x)\|$, $\|x\|\leq 1$, as follows by \eqref{real-Markov-1} is not the best possible. However, it improves the one given in \eqref{Mineq} for the case where $P$ is an \textit{$m$-homogeneous} polynomial on $X$.


\begin{thebibliography}{99}
%
\bibitem{ALRT}\label{ALRT}
{\sc R. M. Aron, M. Lacruz, R. A. Ryan \and A. M. Tonge}, \emph{The generalized Rademacher functions}, Note di Matematica {\bf 12} (1992), 15--25.
%
\bibitem{AK}\label{AK}
{\sc F. Albiac and N. J. Kalton}, \emph{Topics in Banach Space Theory}, Springer, 2006.
%
\bibitem{Banach}\label{Banach}
{\sc S. Banach}, \emph{\"{U}ber homogene Polynome in $(L^2)$}, Studia Math. {\bf 7} (1938), 36--44.
%
\bibitem{BM}\label{BM}
{\sc S. Banach and S. Mazur}, \emph{Zur theorie der linearen dimension}, Studia Math. {\bf 4} (1933), 100--112.
%
\bibitem{BARAN}\label{BARAN}
{\sc M. Baran}, \emph{Bernstein type theorems for compact sets in $\mathbb{R}^n$ revisited}, J. Approx. Th. {\bf 79} (1994), 190--198.
%
\bibitem{BSar}\label{BSar}
{\sc C. Ben\'itez and Y. Sarantopoulos}, \emph{Characterization of real inner product spaces by means of symmetric bilinear forms}, J. Math. Anal. Appl. {\bf 180} (1993), 207--220.
%
\bibitem{BST}\label{BST}
{\sc C. Ben\'itez, Y. Sarantopoulos \and A. Tonge}, \emph{Lower bounds for norms of products of polynomials}, Math. Proc. Camb. Phil. Soc. {\bf 124} (1998), 395--408.
%
\bibitem{BL}\label{BL}
{\sc J. Bergh and J. L\"{o}fstr\"{o}m}, \emph{Interpolation spaces, An Introduction} (Grundlehren der mathematischen Wissenschaften 223), Springer-Verlag, 1976.
%
\bibitem{BS}\label{BS}
{\sc J. Bochnak and J. Siciak}, \emph{Polynomials and multilinear mappings in topological vector spaces}, Studia Math. {\bf 39} (1971), 59--76.
%
\bibitem{Chae}\label{Chae}
{\sc S. B. Chae}, \emph{Holomorphy and calculus in normed spaces}, Dekker, New York, 1985.
%
\bibitem{ChSar}\label{ChSar}
{\sc M. Chatzakou and Y. Sarantopoulos}, \emph{Bernstein and Markov-type inequalities for polynomials on  $L_{p}(\mu)$ spaces}, Dolomites Research Notes on Approx. {\bf 12} (2019), 16--28.
%
\bibitem{CS}\label{CS}
{\sc J. G. van der Corput and G. Schaake}, \emph{Ungleichungen f\"ur polynome und trigonometrische polynome}, Compositio Math. {\bf 2} (1935) 321--361; Berichtigung, {\em Compositio Math. } {\bf 3} (1936), 128.
%
\bibitem{Davie-1}\label{Davie-1}
{\sc A. M. Davie}, \emph{Quotient algebras of uniform algebras}, J. London Math. Soc. (2) {\bf 7} (1973), 31--40.
%
\bibitem{Davie-2}\label{Davie-2}
---------, \emph{Power bounded elements in a $Q$-algebra}, Bull. London Math. Soc. {\bf 6} (1974), 61--65.
%
\bibitem{DJT}\label{DJT}
{\sc J. Diestel, H. Jarchow and A. Tonge}, \emph{Absolutely summing operators}, Cambridge Studies in Advanced Mathematics 43, Cambridge Univ. Press, 1995.
%
\bibitem{DU}\label{DU}
{\sc J. Diestel and J. J. Uhl, Jr.}, \emph{Vector measures}, American Mathematical society(Mathematical Surveys and Monographs), vol.15, 1977.
%
\bibitem{DGR}\label{DGR}
{\sc V. Dimant, D. Galicer and J. T. Rodr\'iguez}, \emph{The polarization constant of finite dimensional complex spaces is one}, Math. Proc. Camb. Phil. Soc. (2021), 1--19, doi: 10.1017/S030500412100013X.
%
\bibitem{D}\label{D}
{\sc S. Dineen}, \emph{Complex analysis on infinite dimensional spaces}, Springer Monographs in Mathematics, Springer-Verlag, Berlin, 1999.
%
\bibitem{H1}\label{H1}
{\sc L. A. Harris}, \emph{Bounds on the derivatives of holomorphic functions of vectors} {\em in: Colloque d'Analyse (Rio de Janeiro, 1972),  L. Nachbin (ed), Actualit\'{e}s Sci. Indust.} 1367, Hermann, Paris (1975) 145--163.
%
\bibitem{H2}\label{H2}
---------, \emph{A Bernstein-Markov theorem for normed spaces}, J. Math. Anal. Appl. {\bf 208} (1997), 476--486.
%
\bibitem{H3}\label{H3}
---------, \emph{Multivariate Markov polynomial inequalities and Chebyshev nodes}, J. Math. Anal. Appl. {\bf 338} (2008), 350--357.
%
\bibitem{H4}\label{H4}
---------, \emph{A proof of Markov's theorem for polynomials on Banach spaces}, J. Math. Anal. Appl. {\bf 368} (2010), 374--381.
%
\bibitem{Ho}\label{Ho}
{\sc L. H\"{o}rmander}, \emph{On a theorem of Grace}, Math. Scand. {\bf 2} (1954) 55--64.
%
\bibitem{Ke}\label{Ke}
{\sc O. D. Kellogg}, \emph{On bounded polynomials in several variables}, Math. Zeit. {\bf 27} (1928), 55--64.
%
\bibitem{KST}\label{KST}
{\sc P. Kirwan, Y. Sarantopoulos and A. M. Tonge}, \emph{Extremal homogeneous polynomials on real normed spaces}, J. Approx. Th. {\bf 97} (1999), 201--213.
%
\bibitem{Lewis}\label{Lewis}
{\sc D. R. Lewis}, \emph{Finite dimensional subspaces of $L_p$}, Studia Math. {\bf 63} (1978), 207--212.
%
\bibitem{LT}\label{LT}
{\sc J. Lindenstrauss and L. Tzafriri}, \emph{Classical Banach spaces}, Springer Lectures in Math., vol. 338, Springer-Verlag, 1973.
%
\bibitem{M}\label{M}
{\sc V. A. Markov}, \emph{\"Uber Polynome, die in einen gegebenen Intervalle m\"oglichst wenig von Null abweichen}, Math. Ann. {\bf 77} (1916), 213--258.
%
\bibitem{MST}\label{MST}
{\sc G. A. Mu\~noz, Y. Sarantopoulos and A. Tonge}, \emph{Complexifications of real Banach spaces, polynomials and multilinear maps}, Studia Math. {\bf 134} (1999), 1--33.
%
\bibitem{PSar}\label{PSar}
{\sc M. K. Papadiamantis and Y. Sarantopoulos}, \emph{Polynomial estimates on real and complex $L_{p}(\mu)$ spaces}, Studia Math. {\bf 235} {\bf (1)} (2016), 31--45.
%
\bibitem{PST}\label{PST}
{\sc A. Pappas, Y. Sarantopoulos and A. Tonge}, \emph{Norm attaining polynomials}, Bull. London Math. Soc. \textbf{39} (2007), 255--264.
%
\bibitem{Peller}\label{Peller}
{\sc V. V. Peller}, \emph{Estimates of functions of power bounded operators on Hilbert spaces}, J. Operator Theory {\bf 7} (1982), 341--372.
%
\bibitem{Sar1}\label{Sar1}
{\sc Y. Sarantopoulos}, \emph{Polynomials and multilinear mappings in Banach spaces}, Ph. D. Thesis, Brunel University, (1986).
%
\bibitem{Sar2}\label{Sar2}
---------, \emph{Estimates for polynomial norms on $L^{p}(\mu)$ spaces}, Math. Proc. Camb. Phil. Soc. {\bf 99} (1986), 263--271.
%
\bibitem{Sar3}\label{Sar3}
---------, \emph{Extremal multilinear forms on Banach spaces}, Proc. Amer. Math. Soc.
{\bf 99}, No. 2, (1987), 340--346.
%
\bibitem{Sar4}\label{Sar4}
---------, \emph{Polynomials on certain Banach spaces}, Bull. Greek Math. Soc. {\bf 28} (1987A), 89--102.
%
\bibitem{Sar5}\label{Sar5}
---------, \emph{Bounds on the derivatives of polynomials on Banach spaces}, Math. Proc. Camb. Phil. Soc. {\bf 110} (1991), 307--312.
%
\bibitem{Sk1}\label{Sk1}
{\sc V. I. Skalyga}, \emph{Bounds for the derivatives of polynomials on centrally symmetric convex bodies}, Izv. Math. {\bf 69} (3) (2005), 607--621.
%
\bibitem{Sk2}\label{Sk2}
---------, \emph{V. A. Markov's theorems in normed spaces}, Izv. Math. {\bf 72} (2008), 383--412.
%
\bibitem{Taylor}\label{Taylor}
{\sc A. E. Taylor}, \emph{Additions to the theory of polynomials in normed linear spaces}, T\^ohoku Math. J., {\bf 44} (1938), 302--318.
%
\bibitem{Scottish}\label{Scottish}
{\sc The Scottish Book (Mathematics from the Scottish Caf\'e)}, ed. R. D. Mauldin, Birkh\"{a}user, 1981.
%
\bibitem{Var}\label{Var}
{\sc N. TH. Varopoulos}, \emph{On a commuting family of contractions on a Hilbert space}, Rev. Roumaine Math. Pures Appl. {\bf 21} (1976), 1283--1285.
%
\bibitem{WW}\label{WW}
{\sc L. R. Williams and J. H. Wells}, \emph{$L^p$ Inequalities}, J. Math. Anal. Appl. {\bf 64} (1978), 518--529.
%
\bibitem{Woj}\label{Woj}
{\sc P. Wojtaszczyk}, \emph{Banach spaces for analysts}(Cambridge Studies in Advanced Mathematics 25), Cambridge University Press, 1991.
%

\end{thebibliography}
\end{document}